\newtheorem{thm}{Theorem}[section]
\newtheorem{la}[thm]{Lemma}
\newtheorem{Defn}[thm]{Definition}
\newtheorem{Remark}[thm]{Remark}
\newtheorem{prop}[thm]{Proposition}
\newtheorem{cor}[thm]{Corollary}
\newtheorem{Example}[thm]{Example}
\newtheorem{Number}[thm]{\!\!}
\newenvironment{defn}{\begin{Defn}\rm}{\end{Defn}}
\newenvironment{example}{\begin{Example}\rm}{\end{Example}}
\newenvironment{rem}{\begin{Remark}\rm}{\end{Remark}}
\newenvironment{numba}{\begin{Number}\rm}{\end{Number}}
\newenvironment{proof}{{\noindent\bf Proof.}}%
                  {\nopagebreak\hspace*{\fill}$\Box$\medskip\medskip\par}
\newcommand{\Punkt}{\nopagebreak\hspace*{\fill}$\Box$}
\newcommand{\wb}{\overline}
\newcommand{\at}{\symbol{'100}}
\newcommand{\wt}{\widetilde}
\newcommand{\impl}{\Rightarrow}
\newcommand{\mto}{\mapsto}
\newcommand{\isom}{\cong}
\DeclareMathOperator{\Ad}{Ad}
\newcommand{\N}{{\mathbb N}}
\newcommand{\K}{{\mathbb K}}
\newcommand{\Q}{{\mathbb Q}}
\newcommand{\Z}{{\mathbb Z}}
\DeclareMathOperator{\ad}{ad}
\newcommand{\ch}{{\mathfrak h}}
\newcommand{\pl}{{\displaystyle \lim_{\longleftarrow}}}
\DeclareMathOperator{\Aut}{Aut}
\newcommand{\sub}{\subseteq}
\DeclareMathOperator{\GL}{GL}
\DeclareMathOperator{\id}{id}
\DeclareMathOperator{\BS}{BS}
\newcommand{\sbull}{{\scriptscriptstyle \bullet}}
\newcommand{\aeq}{\Leftrightarrow}
\DeclareMathOperator{\dv}{div}
\DeclareMathOperator{\tor}{tor}
\begin{document}
$\;$\\[-27mm]
\begin{center}
{\Large\bf Expansive automorphisms of\vspace{1mm} totally disconnected, locally compact groups}\\[7mm]
{\bf Helge Gl\"{o}ckner and C.R.E. Raja}\vspace{4mm}
\end{center}
\begin{abstract}\vspace{.3mm}\noindent
We study automorphisms~$\alpha$ of a totally disconnected, locally
compact group~$G$ which are \emph{expansive} in the sense that
$\bigcap_{n \in \Z} \alpha^n(U)=\{1\}$ for some identity
neighbourhood $U\sub G$. Notably, we prove that the automorphism
induced by~$\alpha$ on a quotient group $G/N$ of $G$ modulo an
$\alpha$-stable closed normal subgroup $N$ is always expansive.
Further results involve the contraction groups $U_\alpha:=\{g\in
G\colon \mbox{$\alpha^n(g)\to 1$ as $n\to\infty$}\}$. If $\alpha$ is
expansive, then $U_\alpha U_{\alpha^{-1}}$ is an open identity
neighbourhood in~$G$. We give examples where $U_\alpha
U_{\alpha^{-1}}$ fails to be a subgroup. However, $U_\alpha
U_{\alpha^{-1}}$ is an $\alpha$-stable, nilpotent open subgroup
of~$G$ if~$G$ is a closed subgroup of~$\GL_n(\Q_p)$.
Further results are devoted to the divisible and torsion parts of~$U_\alpha$,
and to the so-called ``nub''
$U_0=\wb{U_\alpha}\cap \wb{U_{\alpha^{-1}}}$
of an expansive automorphism.\vspace{3mm}
\end{abstract}
%
{\footnotesize {\em Classification}:
22D05 (primary); 
22D45, 
22E20, 
37A25, 
37P20 (secondary)\\[2mm] 
{\em Key words}: Totally disconnected group, locally compact group,
locally profinite group, contraction group, expansive automorphism,
contractive automorphism, tidy subgroup, Willis's theory, $p$-adic
Lie group, finite depth, composition series, simple factor,
Baumslag-Solitar group, Schlichting completion, nub, closedness,
normalizer}
\section*{Introduction and statement of results}
We consider automorphisms $\alpha\colon G\to G$ of a totally
disconnected locally compact topological group~$G$ which are
\emph{expansive} in the sense that $\bigcap_{n \in \Z}
\alpha^n(U)=\{1\}$ for some identity neighbourhood $U\sub G$.
Expansive automorphisms of totally disconnected, \emph{compact}
groups were studied by \cite{Ki}, \cite{Sc} and\linebreak recently
in \cite{Wi3}.
The importance of expansive automorphisms
for the theory of general automorphisms is highlighted by the fact
that every automorphism $\alpha$ of a totally disconnected compact group~$G$
is a projective limit
\[
\alpha=\pl\, \alpha_j\vspace{-1.3mm}
\]
of expansive automorphisms $\alpha_j$ of certain Hausdorff quotient groups $G/N_j$
of $G$ such that $G=\pl\, G/N_j$\vspace{-.7mm} (see \cite{Wi3}).
Our goal is to improve the understanding in the case
of non-compact groups. Special cases of expansive automorphisms are
automorphisms $\alpha\colon G\to G$ which are \emph{contractive} in
the sense that $\alpha^n(g)\to 1$ as $n\to\infty$ for each $g\in G$
(see Remark~\ref{contrexpa}). The structure of totally disconnected,
locally compact groups admitting contractive automorphisms was
elucidated in~\cite{GaW} (building on earlier work like \cite{Sie}
and \cite{Wan}), and the results obtained there can also be used as
tools in the investigation of expansive automorphisms (as we shall
see). Other key ingredients are the structure theory of totally
disconnected, locally compact groups (\cite{Wi1}, \cite{Wi2}), in
which contractive automorphisms
play an important role (as worked out in~\cite{BaW}).\\[4mm]
Note that every automorphism~$\alpha$ of a discrete group~$G$
(e.g., $\alpha=\id_G$) is expansive
(as we may choose $U=\{1\}$ then).
Therefore discrete groups and their automorphisms
are part of the theory of expansive automorphisms.
As a consequence, groups permitting expansive automorphisms
need not have
any particular algebraic properties.\\[4mm]
Our first main result generalizes \cite[Proposition~6.1]{Wi3}
(devoted to the case of compact groups).
As usual,
a subset $H\sub G$ is called \emph{$\alpha$-stable} if $\alpha(H)=H$.\\[4mm]
{\bf Theorem A.}
\emph{Let $\alpha\colon G\to G$ be an
automorphism of a totally disconnected, locally compact group,
$N\sub G$ be an $\alpha$-stable closed normal subgroup
and $\bar{\alpha}\colon G/N\to G/N$,
$gN\mto \alpha(g)N$ be the automorphism of $G/N$ induced by~$\alpha$.
Then $\alpha$ is expansive if and only if both $\alpha|_N$ and $\bar{\alpha}$
are expansive.}\\[4mm]
Composition series play a central role
in the study of contractive automorphisms~\cite{GaW}.
In the case of expansive automorphisms,
composition series need not exist. However, there are
series which get as close as possible to such.\\[4mm]
{\bf Theorem B.}
\emph{If $\alpha\colon G\to G$ is an expansive
automorphism of a totally disconnected, locally compact group~$G$,
then there exist $\alpha$-stable closed subgroups}
\[
G=G_0\supseteq G_1\supseteq\cdots
\supseteq G_n=\{1\}
\]
\emph{of $G$ such that $G_j$ is normal in $G_{j-1}$ for $j\in \{1,\ldots,n\}$
and
every $\alpha_j$-stable
closed normal subgroup of $G_{j-1}/G_j$
is discrete or open, where
$\alpha_j$ denotes the induced automorphism $G_{j-1}/G_j\to G_{j-1}/G_j$,
$gG_j\mto\alpha(g)G_j$.
Moreover, one can achieve that
each of the quotient groups $G_{j-1}/G_j$
is discrete, abelian or topologically perfect.}\\[4mm]
In addition, one may assume that all abelian, non-discrete
factors $G_{j-1}/G_j$ are
simple contraction groups with respect to the automorphism~$\alpha_j$
or its
inverse, or isomorphic to an infinite power
$C_p^\Z$ of a cyclic group of prime order,
endowed with the right-shift (cf.\ Remark~\ref{nicerabel}
and Proposition~\ref{abeliancase}).
Topological perfectness means that the commutator group
is dense.
For second countable groups, more detailed information on the perfect factors is available (see Remark~\ref{newremark}).
The proofs of Theorem~A and Theorem~B
hinge on the fact that there is a bound
on the number of non-discrete factors in series
for $(G,\alpha)$ (Proposition~\ref{almostmax}).\\[4mm]
According to \cite{Wi2},
a compact open subgroup $V\sub G$ is called
\emph{tidy for $\alpha$}
if it has the following properties:
\begin{itemize}
\item[{\bf T1}]
$V=V_+V_-$, where $V_+:=\bigcap_{n=0}^\infty\alpha^n(V)$
and $V_-:=\bigcap_{n=0}^\infty\alpha^{-n}(V)$;
\item[{\bf T2}]
The $\alpha$-stable subgroups $V_{++}:=\bigcup_{n\in \N_0}\alpha^n(V_+)$
and $\bigcup_{n\in \N_0}\alpha^{-n}(V_-)$ $=:V_{--}$
are closed in~$G$.
\end{itemize}
Note that $V_+\sub \alpha(V_+)\sub \alpha^2(V_+)\sub\cdots$
and $V_-\sub \alpha^{-1}(V_-)\sub \alpha^{-2}(V_-)\sub\cdots$
here. Following \cite{Wi3},
the intersection~$U_0$
of all subgroups~$V$ which are tidy for~$\alpha$
is called the \emph{nub} of~$\alpha$;
it is a compact, $\alpha$-stable subgroup of~$G$.\\[4mm]
We mention that a totally
disconnected, locally compact group
admitting an expansive automorphism~$\alpha$
is always metrizable (cf.\ \cite{Lam})
and has a second countable,
$\alpha$-stable open subgroup
(Lemma~\ref{basicexpa}\,(a)).\\[4mm]
If $\alpha\colon G\to G$ is an automorphism
of a totally disconnected, locally compact group~$G$,
then
\[
U_\alpha:=\{g\in G\colon \mbox{$\alpha^n(g)\to 1$ as $n\to\infty$}\}
\]
is a subgroup of~$G$ called the associated \emph{contraction
group}. In general, $U_\alpha$ need not be closed.
However, if~$\alpha$ is expansive,
then the topology on~$U_\alpha$
``can be made locally compact,''
i.e., it can be
refined to a totally disconnected,
locally compact group topology~$\tau^*$
with respect to which~$\alpha|_{U_\alpha}$
is contractive
(see \cite[Proposition~9]{Si2} for this fact, or our Lemma~\ref{expalcp}).
In this way, the structure theory of locally compact
contraction groups (see \cite{Sie},
\cite{Wan} and \cite{GaW})
becomes available.
In particular, the set
\[
T_\alpha:=\tor(U_\alpha)
\]
of all torsion elements in the group~$U_\alpha$
and the set
\[
D_\alpha:=\dv(U_\alpha)
\]
of divisible elements
are $\alpha$-stable closed subgroups
of $(U_\alpha,\tau^*)$,
and $(U_\alpha,\tau^*)=D_\alpha\times T_\alpha$
internally as a topological group,
if we endow $D_\alpha$ and $T_\alpha$
with the topology induced by $(U_\alpha,\tau^*)$
(see \cite[Theorem~B]{GaW}).\\[4mm]
Recall that the closure
$\wb{U_\alpha}$ of the contraction group $U_\alpha$ in~$G$
plays a role in the structure
theory of totally disconnected,
locally compact groups; for example, the scale of $\alpha^{-1}$
can be calculated on $\wb{U_\alpha}$ (see \cite[Proposition~3.21\,(3)]{BaW}).
Our next theorem provides information on $\wb{U_\alpha}$,
and on the divisible part~$D_\alpha$~of~$U_\alpha$.\\[4mm]
{\bf Theorem C.}
\emph{Let $G$ be a totally disconnected, locally compact group
and $\alpha\colon G\to G$ be
an automorphism such that $U_\alpha$ can be made locally compact
$($for example, any expansive automorphism$)$. Then
$\wb{U_\alpha}=D_\alpha\times \wb{T_\alpha}$} (\emph{internally})
\emph{as a topological group,
and $\wb{T_\alpha}=T_\alpha U_0$.
In particular,
$D_\alpha=\dv(U_\alpha)$
is an $\alpha$-stable} closed \emph{subgroup of~$G$,
and both the closure $\wb{T_\alpha}$ of~$T_\alpha$
in~$G$
and the nub~$U_0$\linebreak
centralize~$D_\alpha$.}\\[4mm]
We mention that
the nub~$U_0$ of an expansive automorphism
need not have an open normalizer in~$G$
(Remark~\ref{notnormal}), in which case
not both of~$U_\alpha$ and~$U_{\alpha^{-1}}$
normalize~$U_0$.\\[4mm]
Classes of examples are also considered.
An analytic automorphism~$\alpha$ of a Lie group $G$ over a totally disconnected
local field~$\K$ is expansive if and only if the associated
Lie algebra automorphism~$L(\alpha)$ is expansive,
which means that none of its eigenvalues
in an algebraic closure has absolute value~$1$.
The Lie algebra $L(G)$ of~$G$ is then nilpotent (Proposition~\ref{algnilp}).
In the case of $p$-adic Lie groups
for a prime number~$p$,
we obtain:\\[4mm]
{\bf Theorem D.}
\emph{Let $G$ be a $p$-adic Lie group
which is \emph{linear} in the sense that there
exists an injective continuous homomorphism $G\to \GL_n(\Q_p)$
for some $n\in\N$.
Let $\alpha\colon G\to G$ be an expansive
automorphism.
Then $G$ has an open $\alpha$-stable
subgroup which is nilpotent.}\\[4mm]
If $\alpha\colon G\to G$ is an expansive automorphism of a totally
disconnected, locally compact group~$G$, then $U_\alpha
U_{\alpha^{-1}}$ is an open subset of~$G$ (see
Proposition~\ref{basicexpa}). In many examples, $U_\alpha
U_{\alpha^{-1}}$ happens to be a subgroup of~$G$ (for instance,
for all expansive automorphisms of closed subgroups
$G\sub \GL_n(\Q_p)$, see Proposition~\ref{alggp}).
However, this is not always so which
could be seen from Remark~\ref{Hei1}.  We also consider the localized
completion~$G_{p,q}$ of a Baumslag-Solitar group $\BS(p,q)=\langle
a,t\colon ta^pt^{-1}=a^q\rangle$ with primes $p\not=q$, as recently
studied in \cite{EaW}. Then $\BS(p,q)\sub G_{p,q}$.
We show:\\[4mm]
{\bf Theorem E.}
\emph{Let $\alpha\colon G_{p,q}\to G_{p,q}$ be conjugation by $t$.
Then $\alpha$ is expansive but $U_\alpha U_{\alpha^{-1}}$ is not a
subgroup of $G_{p,q}$.}\\[4mm]
\emph{Acknowledgement.} The second author was supported by the
German Academic Exchange Service (DAAD) and the second author wishes
to thank DAAD and Institut f\"{u}r Mathematik, Universit\"{a}t
Paderborn, Paderborn, Germany for providing the facilities during
his stay.
\section{Preliminaries and basic facts}\label{secprel}
We write $\N=\{1,2,\ldots\}$, $\N_0:=\N\cup\{0\}$ and
$\Z:=\N_0\cup
{-\N}$. If $J$ is a finite set, we let $\# J$ be its cardinality. We
write $X\sub Y$ for inclusion of sets, while $X\subset Y$ means
that~$X$ is a proper subset of~$Y$. As usual, we write $N\lhd G$
if~$N$ is a normal subgroup of~$G$. All topological groups
considered in this article are assumed Hausdorff, and locally
compact topological groups are simply called locally compact groups.
Totally disconnected, locally compact non-discete topological fields
(like the field of $p$-adic numbers) will be called local fields
(see \cite{Wei} for further information).  See \cite{Bou} and
\cite{Ser} for basic information on Lie groups over local (and more
general complete ultrametric) fields (which we always assume
finite-dimensional). If we say that~$\alpha$ is an automorphism of a
topological group, then we assume that both~$\alpha$ and
$\alpha^{-1}$ are continuous; similarly, both~$\alpha$
and~$\alpha^{-1}$ are assumed analytic if~$\alpha$ is an
automorphism of an analytic Lie group over a local field. We write
$\Aut(G)$ for the group of all automorphisms of a topological
group~$G$. If a subgroup~$N\sub G$ is stable under all $\alpha\in
\Aut(G)$, then~$N$ is called \emph{topologically characteristic}. A
topological group~$G$ is called \emph{topologically perfect} if its
commutator group $[G,G]$ is dense in~$G$. If $F$ is a finite group
and $X$ a set, we write $F^X:=\prod_{x\in X}F$ for the direct power
endowed with the compact product topology. By contrast, $F^{(X)}\sub
F^X$ is the subgroup of all $(g_x)_{x\in X}\in F^X$ such that
$g_x=1$ for all but finitely many $x\in X$. We shall always endow
$F^{(X)}$ with the discrete topology. Surjective, open, continuous
homomorphisms between topological
groups are called \emph{quotient morphisms}.\\[3mm]
If $\alpha\colon G\to G$
is an automorphism of a locally compact group,
choose a Haar measure $\lambda$ on~$G$ and define
the module of~$\alpha$ via
\[
\Delta_G(\alpha)
:= \lambda(\alpha(K))/\lambda(K),
\]
for any compact subset
$K\sub G$ with non-empty interior.
If $\alpha\colon G\to G$
is an automorphism of a totally disconnected, locally compact group,
we define the nub~$U_0$, the contraction group
$U_\alpha$ and its subgroups $T_\alpha$ and $D_\alpha$
as explained in the introduction.
If $V\sub G$ is a compact open subgroup,
we shall use the subgroups $V_+$, $V_{++}$,
$V_-$ and $V_{--}$ defined there,
and abbreviate
\begin{equation}\label{triv}
V_0:=V_+\cap V_-=\bigcap_{n\in \Z}\alpha^n(V).
\end{equation}
We shall also need the so-called \emph{Levi factor}
\[
M_\alpha:=\{g\in G\colon \mbox{$\{\alpha^n(g)\colon n\in \Z\}$
is relatively compact}\};
\]
it is known that $M_\alpha$
is an $\alpha$-stable closed subgroup
of~$G$ \cite[p.\,224]{BaW}.
The following lemma compiles basic facts
concerning expansive automorphisms.
\begin{la}\label{basicexpa}
If $\alpha$ is an expansive automorphism
of a totally disconnected,
locally compact group~$G$,
then the following holds:
\begin{itemize}
\item[\rm(a)]
$G$ is metrizable
and has an $\alpha$-stable,
$\sigma$-compact open subgroup;
\item[\rm(b)]
$V_{--}=U_\alpha$
and $V_{++}=U_{\alpha^{-1}}$
for each compact open subgroup $V\sub G$
such that $V_0=\{1\}$;
\item[\rm(c)]
$G$ has a compact open subgroup $V$ such that
$V=V_+V_-$ and $V_0=\{1\}$;
\item[\rm(d)]
$U_\alpha U_{\alpha^{-1}}$
is open in~$G$;
\item[\rm(e)]
$\alpha|_H$ is expansive, for each
$\alpha$-stable subgroup $H\sub G$.
\end{itemize}
\end{la}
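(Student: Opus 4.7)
\emph{Plan.} The underlying tool I would use is: for any compact open subgroup $V\sub G$ with $V_0=\bigcap_{n\in\Z}\alpha^n(V)=\{1\}$ and any identity neighbourhood $N$ in $G$, the closed subsets $\alpha^n(V)\cap(V\setminus N)$, $n\in\Z$, of the compact set $V\setminus N$ have empty total intersection, so by the finite intersection property some $k\in\N$ satisfies $\bigcap_{|n|\le k}\alpha^n(V)\sub N$. Such a $V$ exists by total disconnectedness of $G$ and expansiveness of $\alpha$: take any compact open $V$ inside an expansive neighbourhood $U$.

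Using this tool, (a) will follow since the open sets $\bigcap_{|n|\le k}\alpha^n(V)$, $k\in\N$, form a countable base of identity neighbourhoods, so $G$ is first countable and hence metrizable by Birkhoff--Kakutani (alternatively,~\cite{Lam}); the subgroup of $G$ generated by $\bigcup_{n\in\Z}\alpha^n(V)$ is open (contains $V$), $\alpha$-stable by construction, and a countable union of compact sets, hence $\sigma$-compact. For (b), the inclusion $U_\alpha\sub V_{--}$ is immediate. For the converse, given $g\in V_{--}$ with $\alpha^m(g)\in V$ for all $m\ge m_0$, and any identity neighbourhood $N$, the tool applied to $V$ and $N$ yields $k$ with $\bigcap_{|j|\le k}\alpha^j(V)\sub N$; then $m\ge m_0+k$ forces $\alpha^{m-j}(g)\in V$ for all $|j|\le k$, i.e.\ $\alpha^m(g)\in\bigcap_{|j|\le k}\alpha^j(V)\sub N$. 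Thus $\alpha^m(g)\to 1$, so $g\in U_\alpha$. The identity $V_{++}=U_{\alpha^{-1}}$ follows by symmetry (applying the argument to $\alpha^{-1}$).

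The principal obstacle I expect is (c): one must produce a compact open $V$ satisfying simultaneously condition T1 ($V=V_+V_-$) and $V_0=\{1\}$. My plan is to invoke Willis's tidying procedure \cite{Wi1,Wi2} applied to a small compact open subgroup inside an expansive neighbourhood $U$, and to argue that the output can be chosen still inside $U$ (or in some $\alpha$-invariant shrinking thereof); then $V_0\sub\bigcap_n\alpha^n(U)=\{1\}$ gives the required triviality. The delicate point is that the tidying procedure may enlarge the initial subgroup, so a careful inductive refinement---or a direct construction that first imposes smallness and then arranges T1---is needed.

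Finally, for (d), let $V$ be as in (c); since $V^{-1}=V$, one also has $V=V_-V_+$. Given $g=ab\in U_\alpha U_{\alpha^{-1}}$ with $a\in U_\alpha$ and $b\in U_{\alpha^{-1}}$, part (b) gives $V_-\sub U_\alpha$ and $V_+\sub U_{\alpha^{-1}}$, so $aVb=(aV_-)(V_+b)$ is an open neighbourhood of $g$ contained in $U_\alpha U_{\alpha^{-1}}$, whence the latter is open. For (e), if $H\sub G$ is $\alpha$-stable, then $U\cap H$ is an identity neighbourhood in $H$ and $\bigcap_{n\in\Z}(\alpha|_H)^n(U\cap H)=H\cap\bigcap_{n\in\Z}\alpha^n(U)=\{1\}$, showing $\alpha|_H$ is expansive.
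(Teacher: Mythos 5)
Your arguments for (a), (d) and (e) are correct and essentially the paper's; for (b) you replace the paper's citation of \cite[Proposition~3.16]{BaW} (which gives $V_{--}=U_\alpha V_0$ directly) by an elementary compactness argument, and that argument is sound: the inclusion $U_\alpha\sub V_{--}$ is indeed immediate, and your use of the finite-intersection ``tool'' to show $V_{--}\sub U_\alpha$ is a correct, self-contained substitute. This is a genuinely more elementary route for (b), at the cost of no generality.

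The genuine gap is part (c), which you explicitly leave open. You correctly locate the difficulty, but your worry that ``the tidying procedure may enlarge the initial subgroup'' concerns only the second tidying step, the one that arranges \textbf{T2}; for (c) only \textbf{T1} is needed, and the step achieving \textbf{T1} never enlarges anything. Concretely, the paper invokes \cite[Lemma~1]{Wi1}: starting from a compact open subgroup $W$ with $\bigcap_{n\in\Z}\alpha^n(W)=\{1\}$ (obtained from van Dantzig inside an expansive neighbourhood), there is $m\in\N$ such that $V:=\bigcap_{k=1}^m\alpha^k(W)$ satisfies $V=V_+V_-$. Because $V$ is a finite intersection of $\alpha$-iterates of $W$, one has
\[
V_0=\bigcap_{n\in\Z}\alpha^n(V)=\bigcap_{n\in\Z}\alpha^n(W)=W_0=\{1\}
\]
automatically; no inductive refinement, and no argument that the output stays inside $U$, is required. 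With this one observation your plan for (c) closes, and your derivation of (d) from (c) and (b) (using $V=V^{-1}=V_-V_+$) is then correct.
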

\begin{proof}
(a) Because $\alpha$ is expansive, there exists
an identity neighbourhood~$V$ such that
$\bigcap_{n\in\Z}\alpha^n(V)=\{1\}$.
After shrinking~$V$,
we may assume that~$V$ is compact.
Because~$V$ is compact and
$(\bigcap_{k=-n}^n\alpha^k(V))_{k\in \N}$
a decreasing sequence of closed identity neighbourhoods in~$V$
with intersection~$\{1\}$,
the members of the sequence
form a basis of identity
neighbourhoods. Hence~$G$ is metrizable.
The subgroup of~$G$ generated by $\bigcup_{n\in \Z}\alpha^n(V)$
is $\alpha$-stable, open and $\sigma$-compact.

(b) By \cite[Proposition 3.16]{BaW},
we have $V_{--}=U_\alpha V_0$
and thus $V_{--}=U_\alpha$. Likewise,
$V_{++}=U_{\alpha^{-1}}$.

(c) and (d): Using expansiveness and van Dantzig's Theorem
\cite[Theorem 7.7]{HaR},
we find a compact open subgroup $W\sub G$ such that
$\bigcap_{n\in \Z}\alpha^n(W)=\{1\}$. By \cite[Lemma~1]{Wi1}, there
exists $m\in \N$ such that $V:=\bigcap_{k=1}^m\alpha^k(W)$ satisfies
$V=V_+V_-$. Then $V_0\sub W_0=\{1\}$ and thus $V_0=\{1\}$,
proving~(c). The latter entails $U_\alpha=V_{--}$ and
$U_{\alpha^{-1}}=V_{++}$, by~(b). In particular, $V_-\sub U_\alpha$
and $V_+\sub U_{\alpha^{-1}}$, entailing that $V=V_+V_-\sub U_\alpha
U_{\alpha^{-1}}$. Thus $U_\alpha U_{\alpha^{-1}}$ is an identity
neighbourhood. Given $g\in U_\alpha$ and $h\in U_{\alpha^{-1}}$, the
map $G\to G$, $x\mto gxh$ is a homeomorphism which takes $U_\alpha
U_{\alpha^{-1}}$ onto itself and $1$ to $gh$. Hence $U_\alpha
U_{\alpha^{-1}}$ has $gh$ in its interior and thus $U_\alpha
U_{\alpha^{-1}}$ is open.

(e) If $V\sub G$ is an identity neighbourhood
with $\bigcap_{n\in\Z}\alpha^n(V)=\{1\}$,
then $V\cap H$ is an identity neighbourhood in~$H$
and $\bigcap_{n\in\Z}\alpha^n(V\cap H)=\{1\}$.
\end{proof}
The first statement of
Lemma~\ref{basicexpa}\,(a)
also follows from \cite[Lemma 2.4]{Lam}.
\begin{numba}\label{recall1}
Let $\alpha$ be an automorphism of a totally disconnected,
locally compact group~$G$ and~$U_0$ be the nub of~$\alpha$.
The following facts are useful:
\begin{itemize}
\item[(a)]
The closure of $U_\alpha$ in~$G$
is
$\wb{U_\alpha}=U_\alpha U_0$
(see \cite[Corollary 3.30]{BaW}
if~$G$ is metrizable; the general
case follows with~\cite{Jaw}).
\item[(b)]
$U_0=\wb{U_\alpha}\cap\wb{U_{\alpha^{-1}}}$
(see \cite[Corollary 3.27]{BaW}
if~$G$ is metrizable; the general
case follows with~\cite{Jaw}).
\item[(c)]
$U_0\cap U_\alpha$ and $U_0\cap U_{\alpha^{-1}}$
are dense in~$U_0$
(see \cite[Theorem 4.1\,(v) and Proposition~5.4\,(i)]{Wi3}).
\item[(d)]
If $N\sub G$ is a closed
$\alpha$-stable subgroup, $q\colon G\to G/N$
the canonical quotient morphism and $\bar{\alpha}$
the automorphism of $G/N$ induced by~$\alpha$,
then $q(U_\alpha)=U_{\bar{\alpha}}$
(see \cite[Theorem 3.8]{BaW} if $G$ is metrizable,
\cite{Jaw} in the general case).
\item[(e)]
$U_\alpha$ is closed if and only if $U_{\alpha^{-1}}$ is closed,
if and only if $G$ has small subgroups tidy for~$\alpha$,
i.e., every identity neighbourhood of~$G$ contains some
tidy subgroup (see \cite[Theorem 3.32]{BaW}
if $G$ is metrizable; the general case
can be deduced using the techniques from~\cite{Jaw}).
\item[(f)]
$P_\alpha:=\{g\in G\colon \mbox{$\alpha^{\N_0}(g)$ is relatively compact}\}$
normalizes~$U_\alpha$
(see \cite[Proposition~3.4]{BaW}).
Hence $M_\alpha=P_\alpha\cap P_{\alpha^{-1}}$ and $U_0\sub M_\alpha$
normalize~$U_\alpha$.
\end{itemize}
\end{numba}
The following results help to show that certain
automorphisms are expansive.
\begin{prop}\label{givesexp}
Let $\alpha$ be an automorphism of a totally disconnected, locally
compact group $G$. Then the following holds:
\begin{itemize}
\item[\rm(a)]
$\alpha$ is expansive if and only if its restriction
$\alpha|_{M_\alpha}$ to the Levi factor is expansive.
\item[\rm(b)]
If $U_\alpha$ is closed, then $\alpha$ is expansive
if and only if $U_\alpha U_{\alpha^{-1}}$ is open in~$G$,
if and only if $M_\alpha$ is discrete.
\end{itemize}
\end{prop}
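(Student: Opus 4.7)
The plan is to prove~(a) first and deduce~(b) from it. The forward implication in~(a) is immediate from Lemma~\ref{basicexpa}(e) applied to the $\alpha$-stable closed subgroup~$M_\alpha$. For the converse, suppose $\alpha|_{M_\alpha}$ is expansive, so there is an open $O\sub G$ with $U:=M_\alpha\cap O$ satisfying $\bigcap_{n\in\Z}\alpha^n(U)=\{1\}$. By van Dantzig's Theorem, pick a compact open subgroup $W\sub G$ and set $V:=W\cap O$, an open identity neighbourhood of~$G$. Since
\[
\bigcap_{n\in\Z}\alpha^n(V) \;=\; \Bigl(\bigcap_{n\in\Z}\alpha^n(W)\Bigr)\cap\Bigl(\bigcap_{n\in\Z}\alpha^n(O)\Bigr)
\]
and the first factor is contained in $M_\alpha$ (its elements have orbits trapped in the compact set~$W$), this intersection equals $M_\alpha \cap \bigcap_{n\in\Z}\alpha^n(O) = \bigcap_{n\in\Z}\alpha^n(U) = \{1\}$; hence $\alpha$ is expansive.

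For part~(b), I would organize the proof as a cycle. The implication ``$\alpha$ expansive $\impl$ $U_\alpha U_{\alpha^{-1}}$ open'' is Lemma~\ref{basicexpa}(d). The implication ``$M_\alpha$ discrete $\impl$ $\alpha$ expansive'' is immediate from~(a), since $\alpha|_{M_\alpha}$ is trivially expansive on a discrete group (take $U=\{1\}$). The remaining implication ``$U_\alpha U_{\alpha^{-1}}$ open $\impl$ $M_\alpha$ discrete'' is where the closedness of~$U_\alpha$ will enter.

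To establish the last implication, it suffices to show $M_\alpha \cap U_\alpha U_{\alpha^{-1}} = \{1\}$, since this intersection is open in~$M_\alpha$. By~1.2(e), $U_{\alpha^{-1}}$ is also closed, so both $U_\alpha$ and $U_{\alpha^{-1}}$ are locally compact in the induced topologies, and $\alpha|_{U_\alpha}$ and $\alpha^{-1}|_{U_{\alpha^{-1}}}$ are contractive automorphisms; in particular, the iterates $\alpha^n$ converge to the constant~$1$ uniformly on compact subsets of~$U_\alpha$, and analogously on~$U_{\alpha^{-1}}$. This yields $U_\alpha \cap M_\alpha = \{1\} = U_{\alpha^{-1}} \cap M_\alpha$: if $g\in U_\alpha\cap M_\alpha$, its backward orbit $\{\alpha^{-n}(g)\}_{n\geq 0}$ lies in a compact $K\sub U_\alpha$, so $g=\alpha^n(\alpha^{-n}(g))\in\alpha^n(K)$ enters every neighbourhood of~$1$, forcing $g=1$. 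Finally, given $g\in M_\alpha$ with $g=uv$, $u\in U_\alpha$, $v\in U_{\alpha^{-1}}$, the relative compactness of $\{\alpha^n(g)\}_{n\geq 0}$ combined with $\alpha^n(u)\to 1$ forces $\{\alpha^n(v)\}_{n\geq 0}$ to be relatively compact in the closed subgroup $U_{\alpha^{-1}}$; the same escape argument, now using contractivity of $\alpha^{-1}|_{U_{\alpha^{-1}}}$, gives $v=1$, so $g=u\in U_\alpha\cap M_\alpha=\{1\}$.

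The main obstacle I anticipate is precisely the escape-to-infinity step inside the closed contraction groups, i.e.\ upgrading ``$\alpha^n(v)$ stays bounded'' to ``$v=1$''. This relies on the closedness of $U_\alpha$ and $U_{\alpha^{-1}}$ (so that traces of compact sets in~$G$ are compact in the respective subgroups) together with the standard fact that a contractive automorphism of a locally compact group converges to the trivial endomorphism uniformly on compact sets; once both ingredients are granted, the remaining book-keeping is routine.
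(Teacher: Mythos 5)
Your proof is correct. Part (a) is essentially the paper's own argument: both proofs reduce to showing that any element whose full $\alpha$-orbit is trapped in a compact identity neighbourhood lies in $M_\alpha$, and then intersect with an expansive neighbourhood of $M_\alpha$ (your ``equals'' in the displayed chain should strictly be ``$\subseteq$'' at the first step, but only the inclusion is needed). Part (b) follows the same cycle of implications as the paper, but you handle the key step --- showing $M_\alpha\cap U_\alpha U_{\alpha^{-1}}=\{1\}$ when $U_\alpha$ is closed --- differently. The paper simply invokes the theorem of \cite{Glo}: since $U_\alpha$ closed is equivalent to $G$ having small tidy subgroups (\ref{recall1}\,(e)), the product map $U_\alpha\times M_\alpha\times U_{\alpha^{-1}}\to U_\alpha M_\alpha U_{\alpha^{-1}}$ is a homeomorphism onto an open set, and triviality of the intersection is immediate from injectivity of that map. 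You instead give a self-contained dynamical argument: closedness of $U_\alpha$ (hence of $U_{\alpha^{-1}}$, by \ref{recall1}\,(e)) makes these locally compact contraction groups, so compact contractivity (\ref{recall2}\,(b)) lets you run the ``bounded orbit forces triviality'' escape argument first for $U_\alpha\cap M_\alpha$ and $U_{\alpha^{-1}}\cap M_\alpha$, and then for a product $uv\in M_\alpha$ by peeling off the contracting factor. Your route is more elementary and avoids the external structural result, at the cost of a slightly longer argument; the paper's route is shorter but leans on the full strength of the tidy-subgroup machinery. Both are valid.
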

\begin{proof}
(a) In view of Lemma~\ref{basicexpa}\,(e), we only need to show that
if $\alpha|_{M_\alpha}$ is expansive, then so is~$\alpha$. Let
$P\sub M_\alpha$ be a compact, open identity neighbourhood such that
$\bigcap_{n\in \Z}\alpha^n(P)=\{1\}$. There is a compact identity
neighbourhood $Q\sub G$ such that $Q\cap M_\alpha=P$. If $g\in
I:=\bigcap_{n\in \Z}\alpha^n(Q)$, then $\alpha^n(g)\in Q$ for each
$n\in \Z$, whence $\alpha^\Z(g)$ is relatively compact and thus
$g\in M_\alpha$. Hence $I\sub M_\alpha$. Since~$I$ is
$\alpha$-stable and $I\sub Q\cap M_\alpha=P$, we deduce that
$I=\bigcap_{n\in \Z}\alpha^n(I) \sub\bigcap_{n\in
\Z}\alpha^n(P)=\{1\}$. Thus $\bigcap_{n\in\Z}\alpha^n(Q)=\{1\}$ and
thus~$\alpha$ is expansive.

(b) If $\alpha$ is expansive, then $U_\alpha U_{\alpha^{-1}}$
is open by Lemma~\ref{basicexpa}\,(d).
If $U_\alpha$ is closed, then the set $U_\alpha M_\alpha U_{\alpha^{-1}}$
is open in~$G$ and the product map
\[
U_\alpha \times M_\alpha \times U_{\alpha^{-1}}\to
U_\alpha M_\alpha U_{\alpha^{-1}}, \quad (x,y,z)\mto xyz
\]
is a homeomorphism (by \ref{recall1}\,(e)
above and part (f)
from the theorem in~\cite{Glo}).
Hence $U_\alpha U_{\alpha^{-1}}\cap M_\alpha=\{1\}$.
If $U_\alpha U_{\alpha^{-1}}$ is open,
this implies that $M_\alpha$ is discrete.
If $M_\alpha$ is discrete, then
$\alpha|_{M_\alpha}$ is expansive and hence
also~$\alpha$, by~(a).
\end{proof}
\begin{numba}\label{cpconnorm}
If $\alpha$ is an automorphism of a totally disconnected,
\emph{compact} group~$G$, then
$U_\alpha$ and $U_{\alpha^{-1}}$ are
normal in~$G$ (since $G=M_\alpha=M_{\alpha^{-1}}$ in \ref{recall1}\,(f)),
and hence so is the nub $U_0=\wb{U_\alpha}\cap \wb{U_{\alpha^{-1}}}$.
\end{numba}
\begin{numba}\label{torsionclosure}
Recall that a group~$G$ is called a torsion group \emph{of finite
exponent} if there exists $m\in \N$ such that $g^m=1$ for all $g\in
G$. If such a group is a subgroup of a topological group~$H$, then
also $g^m=1$ for all $g$ in the closure $\wb{H}$ of $G$ in~$H$, and
thus also $\wb{H}$ is a torsion group of finite exponent.
\end{numba}
\begin{numba}\label{conlarge}
Let $\alpha$ be an expansive automorphism of a totally disconnected,
compact group~$G$. Then the following holds:
\begin{itemize}
\item[(a)]
The nub $U_0$ is open in~$G$ (see \cite[Lemma~5.1]{Wi3}), whence
also $\wb{U_{\alpha}}$ and $\wb{U_{\alpha^{-1}}}$ are open in~$G$
(by \ref{recall1}\,(b)).
\item[(b)]
$G$ is a torsion group of finite exponent [By (a) and
\ref{cpconnorm}, $U_0$ is a normal subgroup and $G/U_0$ is finite,
hence a torsion group of finite exponent. It therefore suffices to
show that~$U_0$ is a torsion group of finite exponent. This is
immediate from
\cite[Proposition~4.4, Theorem 6.2 and Proposition 6.3]{Wi3}).
\item [(c)] Since $U_\alpha $ and $U_{\alpha ^{-1}}$ are normal in $G$,
the set $U_\alpha U_{\alpha^{-1}}$ is an open $\alpha$-stable subgroup
of~$\!G\!$ contained in~$\!U_0$. \hspace*{-1mm}Thus $\!U_0 \!=\!U_\alpha U_{\alpha ^{-1}}\!$
by \hspace*{-.3mm}\cite[Corollary 4.3]{Wi3}.
\end{itemize}
\end{numba}
\begin{la}\label{nochange}
Let $\alpha$ be an automorphism of a totally disconnected,
locally compact group~$G$ and $U_0$ be the nub of~$\alpha$.
Let $H\sub G$ be a closed, $\alpha$-stable subgroup.
Then the following holds:
\begin{itemize}
\item[\rm(a)]
Then nub of $\alpha|_H$ is contained in~$U_0$.
\item[\rm(b)]
If $U_0\sub H$, then the nub of $\alpha|_H$
coincides with~$U_0$.
\end{itemize}
\end{la}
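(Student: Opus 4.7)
The plan is to reduce both statements to the characterization of the nub recorded in \ref{recall1}(b), namely that for any automorphism $\beta$ of a totally disconnected, locally compact group, the nub of~$\beta$ coincides with $\wb{U_\beta}\cap \wb{U_{\beta^{-1}}}$. This identity turns nub questions into contraction-group questions, and contraction groups behave transparently under passage to a closed $\alpha$-stable subgroup. Before starting, I record two trivial observations that will be used repeatedly: first, because $H$ carries the subspace topology from~$G$, a sequence $\alpha^n(g)$ converges to~$1$ in~$H$ iff it does so in~$G$, so $U_{\alpha|_H}=U_\alpha\cap H$ and $U_{(\alpha|_H)^{-1}}=U_{\alpha^{-1}}\cap H$; second, because $H$ is closed in~$G$, the closure in~$H$ of any subset of~$H$ agrees with its closure taken in~$G$.

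For part (a), let $W_0$ denote the nub of $\alpha|_H$. Applying \ref{recall1}(b) to $\alpha|_H$ on the totally disconnected, locally compact group $H$ yields $W_0=\wb{U_\alpha\cap H}\cap \wb{U_{\alpha^{-1}}\cap H}$, with the closures taken in~$H$ and hence (by the second observation) also valid as closures in~$G$. Since $U_\alpha\cap H\sub U_\alpha$ and $U_{\alpha^{-1}}\cap H\sub U_{\alpha^{-1}}$, passage to closures and intersection gives $W_0\sub \wb{U_\alpha}\cap \wb{U_{\alpha^{-1}}}=U_0$, again by \ref{recall1}(b), now applied to $\alpha$ on~$G$.

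For part (b), assume $U_0\sub H$. By \ref{recall1}(c), the subgroup $U_0\cap U_\alpha$ is dense in~$U_0$; since $U_0\cap U_\alpha\sub H\cap U_\alpha$, we obtain $U_0\sub \wb{H\cap U_\alpha}$, and analogously $U_0\sub \wb{H\cap U_{\alpha^{-1}}}$. Intersecting and invoking \ref{recall1}(b) for $\alpha|_H$ gives $U_0\sub W_0$, which together with (a) produces the desired equality. The only real conceptual obstacle is to avoid a tidy-subgroup approach: it is not clear that $V\cap H$ is tidy for $\alpha|_H$ whenever $V$ is tidy for~$\alpha$, since property~{\bf T2} (closedness of $V_{\pm\pm}$) is not obviously inherited. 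Routing the argument through the contraction-group formulas \ref{recall1}(b)--(c) avoids this difficulty entirely, at the cost of citing~\cite{Jaw} in case $G$ is not metrizable.
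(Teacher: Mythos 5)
Your proof is correct and follows essentially the same route as the paper's: part (a) applies the identity $\mathrm{nub}=\wb{U_\beta}\cap\wb{U_{\beta^{-1}}}$ from \ref{recall1}\,(b) to both $\alpha|_H$ and $\alpha$, and part (b) uses the density of $U_0\cap U_\alpha$ in $U_0$ from \ref{recall1}\,(c) to get the reverse inclusion. The additional observations you record (that $U_{\alpha|_H}=U_\alpha\cap H$ and that closures in the closed subgroup $H$ agree with closures in $G$) are correct and are implicitly used in the paper's argument as well.
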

\begin{proof}
(a) Let~$W$ be the nub of~$\alpha|_H$.
Using \ref{recall1}\,(b) twice, we deduce that
$W=
\wb{U_{\alpha|_H}} \cap \wb{U_{(\alpha|_H)^{-1}}}
\sub
\wb{U_\alpha} \cap \wb{U_{\alpha^{-1}}}=U_0$.

(b) Since $U_0\sub H$, we have
$U_\alpha\cap U_0\sub
U_\alpha\cap H=U_{\alpha|_H}$
and thus $U_0=\wb{U_0\cap U_\alpha}
\sub \wb{U_{\alpha|_H}}$ (using \ref{recall1}\,(c)).
Likewise, $U_0\sub \wb{U_{(\alpha|_H)^{-1}}}$.
Hence $U_0\sub \wb{U_{\alpha|_H}}\cap \wb{U_{(\alpha|_H)^{-1}}}=W$.
Since $W\sub U_0$ by (a), equality follows.
\end{proof}
We shall also need certain facts concerning
contractive automorphisms.
\begin{numba}\label{recall2}
Let $\alpha$ be a contractive automorphism of a topological
group $G$.
\begin{itemize}
\item[(a)]
If $G\not=\{1\}$, then $G$ is infinite and
non-discrete. [If $x\in G\setminus\{1\}$,
then $\alpha^n(x)\not=1$ for all
$n$ and $\alpha^n(x)\to 1$, entailing
that the topological group $G$ is not discrete
and hence infinite.]
\item[(b)]
If $G$ is locally compact,
then $\alpha$ is \emph{compactly contractive},
i.e., for each identity neighbourhood
$U\sub G$ and compact set $K\sub G$ there exists $m\in \N$
such that $\alpha^n(K)\sub U$ for all $n\geq m$
(see \cite[Lemma~1.4\,(iv)]{Sie}).
Moreover, $G$ is non-compact (unless $G=\{1\}$);
see \cite[3.1]{Sie}.
\end{itemize}
\end{numba}
\begin{numba}\label{recall3}
Let $\alpha$ be a contractive automorphism
of a totally disconnected,
locally compact group $G$. Then the following holds:
\begin{itemize}
\item[(a)]
If $G\not=\{1\}$,
then $\Delta_G(\alpha^{-1})$ is an integer $\geq 2$
(see \cite[Proposition~1.1\,(e)]{GaW}).
\item[(b)] If
$G=G_0\supset G_1\supset G_2\supset\cdots\supset G_n=\{1\}$
is a series of $\alpha$-stable closed subgroups
of $G$ such that $G_j$ is a proper normal
subgroup of $G_{j-1}$ for all $j\in\{1,\ldots, n\}$,
then~$n$ is bounded by the number of prime factors
of $\Delta_G(\alpha^{-1})$
(see \cite[Lemma~3.5]{GaW}).
\item[(c)]
$G=\dv(G)\times \tor(G)$ as a topological group
for a divisible torsion free group $\dv(G)$
and a torsion group $\tor(G)$ of finite
exponent (cf.\ \cite[Theorem~B]{GaW}).
\item[(d)]
If $N\sub G$ is an $\alpha$-stable closed normal subgroup, $q\colon
G\to G/N$ the canonical quotient morphism and $\bar{\alpha}$ the
automorphism of $G/N$ induced by~$\alpha$, then $q(\dv(G))=\dv(G/N)$
and $q(\tor(G))=\tor(G/N)$. [The inclusions $q(\dv(G))\sub \dv(G/N)$
$q(\tor(G))\sub\tor(G/N)$ are clear.  Since
$G/N=q(\dv(G)\tor(G))=q(\dv(G))q(\tor(G))$ and $G/N=\dv(G/N)\times
\tor(G/N)$, equality follows.]
\end{itemize}
\end{numba}
\begin{rem}\label{contrexpa}
If an automorphism $\alpha\colon G\to G$ of a totally disconnected,
locally compact group is contractive, then it is also expansive. To
see this, let $V\sub G$ be any compact neighbourhood of the identity. If
$g\in G\setminus\{1\}$, then $G\setminus\{g\}$ is an identity
neighbourhood, whence there exists $n\in \N$ such that
$\alpha^n(V)\sub G\setminus\{g\}$ (see \ref{recall2}\,(b)). Thus
$g\not\in \alpha^n(V)$ and we have shown that $\bigcap_{n\in
\Z}\alpha^n(V)=\{1\}$.
\end{rem}
\section{Making contraction groups locally compact}\label{secrefine}
The problem of refining group topologies
on contraction groups was studied by Siebert~\cite{Si2}.
The following special case is
useful for our purposes.
\begin{defn}\label{makelcp}
Let $G$ be a topological group, with topology $\tau$, and
\[
\alpha\colon (G,\tau)\to (G,\tau)
\]
be a contractive automorphism.
We say that $(G,\alpha)$ (or simply $G$) \emph{can be made locally compact}
if there exists a locally compact topology $\tau^*$
on $G$ making it a topological group
such that $\tau\sub \tau^*$ and $\alpha\colon (G,\tau^*)\to (G,\tau^*)$
is a contractive automorphism.
The topology $\tau^*$ is unique
(if it exists), as will be recalled
presently.
We write $G^*$ for $G$, endowed with the topology $\tau^*$.
If $\tau$ is totally disconnected,
then also $\tau^*$ is totally disconnected (as the inclusion map $G^*\to G$
is continuous).
\end{defn}
The following fact is also obtained in \cite[Corollary~8]{Si2}
as part of a more general theory.
\begin{la}\label{unifiner}
$\tau^*$ is uniquely determined by the properties
from Definition~\emph{\ref{makelcp}}.
\end{la}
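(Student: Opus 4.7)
The plan is to show that any two topologies $\tau^*_1,\tau^*_2$ satisfying the conditions of Definition~\ref{makelcp} must coincide. The strategy is to locate a subgroup $V\sub G$ which is compact and open in both $(G,\tau^*_1)$ and $(G,\tau^*_2)$; then the topologies will agree on~$V$ by the classical principle that a continuous bijection from a compact space to a Hausdorff space is a homeomorphism, and since $V$ is open in both, they will agree globally on~$G$. (I work in the totally disconnected setting, which is the one relevant for the paper and in which $\tau^*_i$ is automatically totally disconnected by the remark after Definition~\ref{makelcp}.)

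I would first use van~Dantzig's theorem to pick, for each $i\in\{1,2\}$, a compact open subgroup $W_i$ of $(G,\tau^*_i)$. Because $\tau\sub\tau^*_i$, each $W_i$ is $\tau$-compact, hence $\tau$-closed, and thus closed in both refined topologies. In particular $W_1\cap W_2$ is a subgroup of~$G$ which is $\tau^*_j$-closed for $j=1,2$. Since $\alpha$ is contractive with respect to \emph{both} topologies, for every $g\in G$ the sequence $(\alpha^n(g))_{n\geq 0}$ eventually lies in $W_1\cap W_2$, so
\[
G\;=\;\bigcup_{n\geq 0}\alpha^{-n}(W_1\cap W_2),
\]
a countable union of $\tau^*_2$-closed sets.

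The crucial step is Baire's category theorem in the locally compact Hausdorff space $(G,\tau^*_2)$: some $\alpha^{-n_0}(W_1\cap W_2)$ must have non-empty $\tau^*_2$-interior, and applying the $\tau^*_2$-homeo\-mor\-phism $\alpha^{n_0}$ shows the same for $W_1\cap W_2$ itself. Because this set is a subgroup, translating any $\tau^*_2$-interior point $g_0$ by $g_0^{-1}$ produces a $\tau^*_2$-open neighbourhood of~$1$ inside $W_1\cap W_2$. Thus $W_1\cap W_2$, and therefore the subgroup $W_1$, is $\tau^*_2$-open. By the symmetric argument in $(G,\tau^*_1)$, the subgroup $W_2$ is $\tau^*_1$-open. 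Consequently $V:=W_1\cap W_2$ is open in both topologies; being $\tau^*_i$-closed in the $\tau^*_i$-compact group $W_i$, it is also $\tau^*_i$-compact.

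To conclude, the continuous bijections $(V,\tau^*_i|_V)\to(V,\tau|_V)$ for $i=1,2$ are each maps from a compact space to a Hausdorff space, hence homeomorphisms, giving $\tau^*_1|_V=\tau|_V=\tau^*_2|_V$. Since $V$ is an open subgroup in both $(G,\tau^*_1)$ and $(G,\tau^*_2)$, the respective group topologies on~$G$ are uniquely determined by this common restriction, yielding $\tau^*_1=\tau^*_2$. The main obstacle in this plan is the Baire-category step: it hinges on contractivity of $\alpha$ with respect to \emph{both} refined topologies to guarantee that $\bigcup_{n\geq 0}\alpha^{-n}(W_1\cap W_2)$ really exhausts~$G$.
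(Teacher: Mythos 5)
Your argument is correct, and its engine is the same as the paper's: contractivity of $\alpha$ for the refined topology exhausts $G$ by the countably many closed sets $\alpha^{-n}(K)$ of a compact set $K$, and the Baire category theorem then forces $K$ to have non-empty interior for the competing topology. The packaging differs, though. The paper compares an arbitrary competitor $\hat{\tau}$ directly with $\tau^*$ using a general compact identity neighbourhood $W$ with $WW^{-1}\subseteq U$ (together with the density of $V^0$ in $V$), and this works for an arbitrary topological group $(G,\tau)$ as in Definition~\ref{makelcp}. Your use of van Dantzig's theorem buys a cleaner endgame: a subgroup with non-empty interior is open, the compact open subgroup $W_1\cap W_2$ is then open and compact for both refined topologies, and the compact-to-Hausdorff bijection onto $(W_1\cap W_2,\tau|_{W_1\cap W_2})$ pins both topologies down to the same thing on an open subgroup, hence everywhere. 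The price is that van Dantzig requires the groups $(G,\tau^*_i)$ to be totally disconnected, which by the remark following Definition~\ref{makelcp} you can only guarantee when $\tau$ itself is totally disconnected. Every application in the paper is of this kind, so the restriction is harmless in practice, but the lemma as stated concerns arbitrary topological groups, and for that generality one would have to replace the compact open subgroups by compact neighbourhoods and fall back on the paper's $WW^{-1}$ device.
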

\begin{proof}
Assume that $\hat{\tau}$ is a group topology on~$G$
with the same properties as~$\tau^*$.
We show that the identity map
\[
\phi\colon (G,\hat{\tau})\to (G,\tau^*),\quad x\mto x
\]
is continuous. Reversing the roles of $\hat{\tau}$ and $\tau^*$,
also $\phi^{-1}$ will be continuous and thus $\hat{\tau}=\tau^*$.
Because $\phi$ is a homomorphism, we need only prove its
continuity at~$1$.
By local compactness, there exists a compact
identity neighbourhood
$V\sub (G,\hat{\tau})$.
After replacing~$V$ with the closure
of its interior~$V^0$, we may assume that $V^0$ is dense in~$V$.
Then~$V$ is also compact in $(G,\tau)$.
Let $U\sub (G,\tau^*)$ be an arbitrary identity neighbourhood,
and $W\sub (G,\tau^*)$ be a compact identity neighbourhood
such that $WW^{-1}\sub U$.
Then~$W$ is also compact in $(G,\tau)$, hence closed in $(G,\tau)$
and hence closed in $(G,\hat{\tau})$.
Since $\alpha\colon (G,\tau^*)\to(G,\tau^*)$
is contractive, we have $G=\bigcup_{n\in \N_0}\alpha^{-n}(W)$.
Hence~$V$ is the countable union of the closed subsets
$V\cap \alpha^{-n}(W)$, for $n\in \N_0$.
By the Baire Category Theorem,
there exists $m\in \N_0$ such that
$V\cap \alpha^{-m}(W)$ has non-empty interior in~$V$.
Because $V^0$ is dense in~$V$, we deduce
that $\alpha^{-n}(W)\cap V^0$ has non-empty interior in~$V^0$,
and so~$W$ has non-empty interior~$W^0$ in $(G,\hat{\tau})$.
Then $W^0(W^0)^{-1}$ is an identity neighbourhood in $(G,\hat{\tau})$
and hence~$U$ is an identity neighbourhood in $(G,\hat{\tau})$.
Since $U=\phi^{-1}(U)$, we see that $\phi$ is continuous at~$1$.
\end{proof}
See also \cite[Proposition~9]{Si2} for the following fact.
\begin{la}\label{expalcp}
Let $G$ be a totally disconnected, locally compact
group. If an automorphism $\alpha\colon G\to G$ is expansive,
then $(U_\alpha,\alpha|_{U_\alpha})$
and $(U_{\alpha^{-1}},\alpha^{-1}|_{U_{\alpha^{-1}}})$
can be made locally compact.
\end{la}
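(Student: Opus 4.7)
The plan is to construct the finer locally compact topology $\tau^*$ on $U_\alpha$ explicitly; uniqueness is already Lemma~\ref{unifiner}, and the statement for $U_{\alpha^{-1}}$ follows by applying the construction to the expansive automorphism $\alpha^{-1}$.

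First, I invoke Lemma~\ref{basicexpa}(c) to select a compact open subgroup $V \sub G$ with $V = V_+ V_-$ and $V_0 = \{1\}$. By Lemma~\ref{basicexpa}(b), $U_\alpha = V_{--} = \bigcup_{n \in \N_0} \alpha^{-n}(V_-)$. Write $W_n := \alpha^n(V_-)$ for $n \in \N_0$. Unwinding $V_- = \bigcap_{k \in \N_0} \alpha^{-k}(V)$ gives $W_n = V_- \cap \alpha(V) \cap \cdots \cap \alpha^n(V)$, so each $W_n$ is a compact open subgroup of $V_-$ in its $G$-subspace topology. Since $\bigcap_n W_n = V_0 = \{1\}$, the standard compactness argument in $V_-$ yields that $\{W_n : n \in \N_0\}$ is a neighbourhood basis of $1$ in $V_-$.

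Next, I declare $\tau^*$ to be the group topology on $U_\alpha$ whose neighbourhood basis of $1$ is $\{W_n : n \in \N_0\}$. Most group-topology axioms are immediate since each $W_n$ is a subgroup with $W_{n+1} \sub W_n$. The non-trivial verification is the conjugation axiom: given $g \in U_\alpha$ and $n \in \N_0$, pick $k \in \N_0$ with $\alpha^k(g) \in V_-$; continuity of conjugation by $\alpha^k(g)$ in $V_-$, together with the basis property of $\{W_j\}$, yields some $j \geq k$ with $\alpha^k(g) W_j \alpha^k(g)^{-1} \sub W_{n+k}$, and applying $\alpha^{-k}$ gives $g W_{j-k} g^{-1} \sub W_n$ with $m := j - k \in \N_0$. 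Hausdorffness comes from $\bigcap_n W_n = \{1\}$, total disconnectedness from each $W_n$ being an open subgroup, and local compactness from $V_-$ being $\tau^*$-compact. The latter holds because the $\tau^*$-subspace topology on $V_-$ coincides with its $G$-subspace topology (both admit $\{W_n\}$ as a neighbourhood basis of $1$), and this coincidence also yields $\tau \sub \tau^*$ by the group property applied to the continuous-at-$1$ inclusion $(U_\alpha,\tau^*) \to (G,\tau)$.

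Finally, $\alpha|_{U_\alpha}$ is a $\tau^*$-homeomorphism because $\alpha(W_n) = W_{n+1}$ and $\alpha^{-1}(W_n) = W_{n-1} \supseteq W_n$ for $n \geq 1$, while $\alpha^{-1}(W_0) \supseteq W_0$ from $V_- \sub \alpha^{-1}(V_-)$. It is contractive: any $g \in U_\alpha$ lies in some $\alpha^{-k}(V_-)$, so $\alpha^{k}(g) \in V_-$, and iterating $\alpha(V_-) \sub V_-$ gives $\alpha^{n}(g) \in \alpha^{n-k}(V_-) = W_{n-k}$ for $n \geq k$; thus $\alpha^n(g) \in W_m$ whenever $n \geq k+m$. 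The main obstacle is the conjugation axiom for $\tau^*$, which rests on the non-obvious fact that $\{W_n\}_{n \in \N_0}$ is genuinely a neighbourhood basis of $1$ inside $V_-$ in its $G$-subspace topology; this depends on both the finite-intersection representation $W_n = V_- \cap \alpha(V) \cap \cdots \cap \alpha^n(V)$ and the expansiveness-driven equality $V_0 = \{1\}$. Once that ingredient is in hand, the remaining verifications reduce to bookkeeping.
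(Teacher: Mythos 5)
Your proposal is correct and follows essentially the same route as the paper: both select a compact open $V$ with $\bigcap_{n\in\Z}\alpha^n(V)=\{1\}$, show via compactness that the subgroups $\alpha^n(V_-)$ ($n\in\N_0$) form a neighbourhood basis of the identity in $V_-$, take these as a basis for the refined group topology $\tau^*$ on $U_\alpha=V_{--}$, and verify the conjugation axiom and contractivity exactly as you do. Your write-up merely makes a few of the paper's implicit verifications (the conjugation axiom, $\tau\sub\tau^*$) more explicit.
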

\begin{proof}
Let $V\sub G$ be a compact open subgroup
such that $\bigcap_{n\in \Z}\alpha^n(V)=\{1\}$.
Then
$\alpha^n(V_-)=V_-\cap\bigcap_{k=1}^n\alpha^k(V)$
is open in~$V_-$ for each
$n\in \N_0$.
Since $V_-$ is compact and
$\bigcap_{n\in \N_0}\alpha^n(V_-)
=\bigcap_{n\in\Z}\alpha^n(V)=\{1\}$,
the open subgroups $\alpha^n(V_-)$ form
a basis of identity neighbourhoods in~$V_-$,
for $n\in \N_0$.
If $n\in \N$ and
$g\in U_\alpha=V_{--}=\bigcup_{k\in \N_0}\alpha^{-k}(V_-)$
(see Proposition~\ref{basicexpa}\,(c)),
then $g\in \alpha^{-k}(V_-)$ for some $k\in \N_0$
and $\alpha^n(V_-)$ is an open subgroup
of the topological group $\alpha^{-k}(V_-)$.
Hence, there exists $m\in \N_0$
such that $g\alpha^m(V_-)g^{-1}\sub \alpha^n(V_-)$.
By the preceding, there exists a group
topology~$\tau^*$ on~$U_\alpha$
for which $\{\alpha^n(V_-)\colon n\in \N_0\}$
is a basis of identity neighbourhoods.
Thus~$V_-$ is an open subgroup of $(U_\alpha,\tau^*)$,
and the latter group induces the given compact
topology on~$V_-$
(as $\{\alpha^n(V_-)\colon n\in \N_0\}$ is a basis of identity neighbourhoods
for both topologies on~$V_-$).
Thus $(U_\alpha,\tau^*)$ is a totally disconnected,
locally compact group and $\alpha$ is still
continuous (being continuous on the open subgroup $V_-$)
as well as $\alpha^{-1}$ (being continuous on the
subgroup $\alpha(V_-)$
which is open in $V_-$).
Since $U_\alpha=\bigcup_{n\in \N_0}\alpha^{-n}(V_-)$
and $(\alpha^n(V_-))_{n\in \N_0}$ is a basis
of identity neighbourhoods,
it readily follows that the automorphism
$\alpha$ of $(U_\alpha,\tau^*)$ is contractive.
\end{proof}
\begin{rem}
$U_\alpha$ can also be made locally compact
if~$\alpha$ is an arbitrary (not necessarily expansive)
analytic automorphism of a Lie group~$G$
over a local field
(see Proposition 13.3\,(b)
in the extended preprint version of~\cite{GIM}).
\end{rem}
\begin{example}
The right-shift $\alpha$ is an automorphism
of the compact group $G:=(\Z_p)^\Z$,
where~$\Z_p$ is the additive group of $p$-adic integers.
The contraction group~$U_\alpha$ is non-trivial,
as it is the group of all $(z_n)_{n\in \Z}$ such that $z_n\to 0$
as $n\to {-\infty}$.
Then $U_\alpha$ cannot be locally compact,
because $\tor(U_\alpha)\sub \tor(G)=\{0\}$
and $\dv(U_\alpha)\sub \dv(G)=\{0\}$.
Thus, if~$U_\alpha$
could be made locally compact,
then we would have $U_\alpha=\dv(U_\alpha)+\tor(U_\alpha)=\{0\}$,
contradicting $U_\alpha\not=\{0\}$.
\end{example}
\begin{la}\label{subandquot}
Let $G$ be a topological group and $\alpha\colon G\to G$
a contractive automorphism such that $(G,\alpha)$
can be made locally compact.
Then we have:
\begin{itemize}
\item[\rm (a)]
$(H,\alpha|_H)$ can be made locally compact
for each closed $\alpha$-stable subgroup~$H$ of~$G$
\emph{(}or $G^*$\emph{)},
and~$H^*$ carries the topology induced
by~$G^*$.
\item[\rm (b)]
If $\phi\colon G\to H$ is a continuous
homomorphism to a topological group $H$ admitting
an automorphism $\beta\colon H\to H$
such that $\beta\circ \phi=\phi\circ \alpha$,
then $\beta|_{\phi(G)}$ is contractive,
$\phi(G)$ can be made locally compact
and
\[
G^*\to \phi(G)^*, \quad x\mto \phi(x)
\]
is a topological quotient map.
\end{itemize}
\end{la}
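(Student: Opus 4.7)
For part~(a), I would equip $H$ with the subspace topology~$\sigma$ inherited from~$G^*$. Since $\tau\sub\tau^*$ makes the identity map $G^*\to G$ continuous, any subset closed in~$G$ is closed in~$G^*$; thus $H$ is closed in~$G^*$ under either hypothesis. As a closed subgroup of the locally compact group~$G^*$, the group $(H,\sigma)$ is itself locally compact and totally disconnected, and $\alpha|_H$ is plainly a bi-continuous automorphism that remains contractive (from $\alpha^n(h)\to 1$ in~$G^*$ one reads off the same in the subspace topology on~$H$). Since $\sigma$ also refines the topology on~$H$ induced from~$G$, the uniqueness assertion of Lemma~\ref{unifiner} identifies~$\sigma$ with $\tau^*$ on~$H$, which is exactly the statement.

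For part~(b), I would first verify that $\beta|_{\phi(G)}$ is a contractive automorphism of~$\phi(G)$: the intertwining gives $\beta(\phi(G))=\phi(\alpha(G))=\phi(G)$, and $\beta^n(\phi(x))=\phi(\alpha^n(x))\to\phi(1)=1$ by continuity of~$\phi$ on~$G$. Next, set $N:=\ker(\phi)$; it is closed and normal in~$G$, and $\alpha$-stable since the intertwining yields $\alpha(N)=N$. By part~(a), $N$ is also closed in~$G^*$, so the quotient $G^*/N$ is a locally compact topological group. I would then define~$\sigma$ on~$\phi(G)$ as the final topology making $\phi\colon G^*\to\phi(G)$ a quotient map, equivalently the topology transported from $G^*/N$ through the algebraic isomorphism $G^*/N\to\phi(G)$. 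Continuity of~$\beta$ and $\beta^{-1}$ on $(\phi(G),\sigma)$ follows from the quotient universal property applied to $\beta\circ\phi=\phi\circ\alpha$ and $\beta^{-1}\circ\phi=\phi\circ\alpha^{-1}$, and contractivity follows as before. Because $\phi\colon G\to H$ is continuous and $\tau\sub\tau^*$, the map $\phi\colon G^*\to H$ is continuous, so~$\sigma$ refines the subspace topology on~$\phi(G)$ from~$H$; Lemma~\ref{unifiner} then forces $\sigma=\tau^*$, yielding $(\phi(G),\sigma)=\phi(G)^*$ and the quotient-map claim by construction.

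The only genuine obstacle is the refinement check needed before invoking Lemma~\ref{unifiner}: for~(a) it is immediate from $\tau\sub\tau^*$, and for~(b) it reduces to continuity of $\phi\colon G^*\to H$. Everything else is bookkeeping with the quotient universal property, closedness of~$N$ in~$G^*$ (supplied by part~(a)), and the fact that contractivity transports along continuous equivariant maps.
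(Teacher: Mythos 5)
Your argument is correct and follows essentially the same route as the paper: in (a) you take the topology induced by $G^*$ on the closed subgroup $H$ and identify it with $\tau^*$ via uniqueness, and in (b) you take the quotient topology from $G^*\to\phi(G)$, check it refines the topology induced by $H$, and transport continuity and contractivity of $\beta|_{\phi(G)}$ along the equivariant quotient map. The only cosmetic difference is that you invoke Lemma~\ref{unifiner} explicitly and note that $\ker\phi$ is closed in $G^*$ via part (a), points the paper leaves implicit.
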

\begin{proof}
(a) Let $\tau$ and $\tau^*$ be as in Definition~\ref{makelcp}.
Since $\tau\sub \tau^*$, $H$ is closed in~$G^*$
(in either case)
and hence~$H$ is a locally compact group
in the topology~$\sigma$
on~$H$ induced by $G^*$, which is finer than the topology
induced by $G$ and turns $\alpha|_H$ into a contractive
automorphism of $(H,\sigma)$. Thus $H^*=(H,\sigma)$.

(b) Let $\sigma$ be the topology on~$\phi(G)$ turning $G^*\to
(\phi(G),\sigma)$, $x\mto\phi(x)$ into a quotient map. Then $\sigma$
is finer than the topology induced on $\phi(G)$ by~$H$. Moreover,
$(\phi(G),\sigma)\cong G^*/\ker\phi$ is locally compact. Since
$\beta|_{\phi(G)}\circ \phi=\phi\circ \alpha\colon G^*\to (\phi(G),\sigma)$
is continuous, the map $\beta|_{\phi(G)}$ is continuous with
respect to the quotient topology~$\sigma$. Also
$(\beta|_{\phi(G)})^{-1}$ is continuous, by an analogous argument.
Finally, $\beta|_{\phi(G)}\colon (\phi(G),\sigma)\to
(\phi(G),\sigma)$ is contractive: Since
$\beta^n\circ\phi=\phi\circ\alpha^n$, this follows from the facts
that $\alpha\colon G^*\to G^*$ is contractive and $\phi\colon G^*\to
\phi(G)^*$ is continuous.
\end{proof}
The following observation is crucial
for many of our arguments.
\begin{prop}\label{almostmax}
Let $\alpha$ be an expansive automorphism
of a totally disconnected, locally compact group~$G$
and
\[
G=G_0\supseteq G_1\supseteq\cdots\supseteq G_n
\]
be $\alpha$-stable closed subgroups of~$G$ such that
$G_j$ is normal in $G_{j-1}$ for all $j\in \{1,\ldots,n\}$.
Let $J$ be the set of all
$j\in\{1,\ldots, n\}$ such that
$G_{j-1}/G_j$ is not discrete.
Then
\[
\# J\leq \ell_\alpha +\ell_{\alpha^{-1}},
\]
where $\ell_\alpha$ is the number of prime
factors of
$\Delta_{U_\alpha^*}(\alpha^{-1}|_{U_\alpha^*})$
and $\ell_{\alpha^{-1}}$ is the
number of prime factors
of $\Delta_{U_{\alpha^{-1}}^*}(\alpha|_{U_{\alpha^{-1}}^*})$.
\end{prop}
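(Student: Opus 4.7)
The plan is to bound $\#J$ by tracking how the subgroups $H_j := U_\alpha \cap G_j$ and $K_j := U_{\alpha^{-1}} \cap G_j$ shrink as $j$ grows. A non-discrete factor $G_{j-1}/G_j$ will force a strict drop in at least one of these two chains, and then the length bound \ref{recall3}(b) for contractive automorphisms will apply. First I invoke Lemma~\ref{expalcp} to equip $U_\alpha$ and $U_{\alpha^{-1}}$ with refined locally compact topologies $\tau^*$ on which $\alpha|_{U_\alpha^*}$ and $\alpha^{-1}|_{U_{\alpha^{-1}}^*}$ are contractive automorphisms of totally disconnected, locally compact groups. Each $H_j$ is $\alpha$-stable, closed in $U_\alpha^*$ (since $\tau^*$ is finer than the topology induced from $G$, in which $H_j$ is closed), and normal in $H_{j-1}$ (because $G_j$ is normal in $G_{j-1}$); the same applies to the $K_j$ in $U_{\alpha^{-1}}^*$.

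The heart of the argument is a dichotomy: writing $J_1 := \{j : H_{j-1} \supsetneq H_j\}$ and $J_2 := \{j : K_{j-1} \supsetneq K_j\}$, I claim $J \subseteq J_1 \cup J_2$. If $j \notin J_1 \cup J_2$, then $U_\alpha \cap G_{j-1} \subseteq G_j$ and $U_{\alpha^{-1}} \cap G_{j-1} \subseteq G_j$, so the product $(U_\alpha \cap G_{j-1})(U_{\alpha^{-1}} \cap G_{j-1})$ lies in $G_j$. But $\alpha|_{G_{j-1}}$ is expansive by Lemma~\ref{basicexpa}(e), so that same product is open in $G_{j-1}$ by Lemma~\ref{basicexpa}(d). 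Hence $G_j$ is open in $G_{j-1}$, so $G_{j-1}/G_j$ is discrete and $j \notin J$.

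To conclude I extract from $(H_j)$ the strictly descending subchain of its distinct values and, if its bottom is nontrivial, append $\{1\}$; the resulting chain is strict, $\alpha$-stable, subnormal in $U_\alpha^*$, ends at $\{1\}$, and has length at least $\#J_1$. Applying \ref{recall3}(b) to the contractive automorphism $\alpha|_{U_\alpha^*}$ gives $\#J_1 \leq \ell_\alpha$ (the case $U_\alpha = \{1\}$, where $\ell_\alpha = 0$, is immediate). A symmetric argument on $U_{\alpha^{-1}}^*$ yields $\#J_2 \leq \ell_{\alpha^{-1}}$, and combining gives the claim: $\#J \leq \#J_1 + \#J_2 \leq \ell_\alpha + \ell_{\alpha^{-1}}$.

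The main obstacle is the dichotomy itself. The tempting alternative---passing to the quotient $G_{j-1}/G_j$ and arguing that the induced automorphism has trivial contraction groups, which then forces discreteness---would require knowing that expansiveness descends to quotients. But that is precisely Theorem~A, whose proof is meant to rest on this very proposition, so any such appeal would be circular. The ambient-group workaround above instead uses only that expansiveness passes to closed $\alpha$-stable subgroups (Lemma~\ref{basicexpa}(e)) together with the openness of $U_\alpha U_{\alpha^{-1}}$ (Lemma~\ref{basicexpa}(d)), both of which are already in hand.
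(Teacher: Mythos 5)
Your proposal is correct and follows essentially the same route as the paper's own proof: the same dichotomy $J\sub J_\alpha\cup J_{\alpha^{-1}}$ via the openness of $(U_\alpha\cap G_{j-1})(U_{\alpha^{-1}}\cap G_{j-1})$ in $G_{j-1}$ (Lemma~\ref{basicexpa}\,(d) and (e)), followed by the length bound \ref{recall3}\,(b) applied to $U_\alpha^*$ and $U_{\alpha^{-1}}^*$. You merely spell out in more detail the extraction of the strictly descending subnormal series from $(U_\alpha\cap G_j)_j$, which the paper leaves implicit, and your remark about avoiding circularity with Theorem~A is accurate.
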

\begin{proof}
Let $J_\alpha$ (resp., $J_{\alpha^{-1}}$) be the set of all
$j\in\{1,\ldots, n\}$ such that $U_\alpha\cap G_j \subset
U_\alpha\cap G_{j-1}$ (resp., $U_{\alpha^{-1}}\cap G_j \subset
U_{\alpha^{-1}}\cap G_{j-1}$). If $j\in \{1,\ldots, n\}\setminus
(J_\alpha\cup J_{\alpha^{-1}})$, then $U_\alpha\cap G_j
=U_\alpha\cap G_{j-1}$ and $U_{\alpha^{-1}}\cap G_j =
U_{\alpha^{-1}}\cap G_{j-1}$. Since $\alpha$ is expansive,
$(U_\alpha\cap G_{j-1})(U_{\alpha^{-1}}\cap G_{j-1})$ is open
in~$G_{j-1}$ (see \ref{basicexpa} (d) and (e)). We deduce that~$G_j$
is open in~$G_{j-1}$ and thus $j\not\in J$. Hence $J\sub
J_\alpha\cup J_{\alpha^{-1}}$, whence $\# J \leq \# J_\alpha + \#
J_{\alpha^{-1}}\leq \ell_\alpha+\ell_{\alpha^{-1}}$ (using
\ref{recall3}\,(b) in the last step).
\end{proof}
We shall use a simple fact.
\begin{la}\label{compdiv}.
Let $K$ be a compact group and $D\sub K$ be a subgroup which is
divisible. Then also the closure $\wb{D}$ is divisible. If~$K$ is
totally disconnected, then $D=\{1\}$.
\end{la}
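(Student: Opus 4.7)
The plan is to handle the two assertions separately; neither is deep, but the second uses the first in a nontrivial way.

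For divisibility of $\wb{D}$, I would work with the $n$-th power map $p_n\colon K\to K$, $x\mapsto x^n$. This is continuous (though not a homomorphism in general), so $p_n(\wb{D})$ is the continuous image of the compact set $\wb{D}$, hence compact and therefore closed in $K$. Divisibility of $D$ gives $p_n(D)=D$, so $p_n(\wb{D})\supseteq D$; being closed, $p_n(\wb{D})\supseteq \wb{D}$. Since $\wb{D}$ is itself a subgroup of $K$ (closure of a subgroup), this is exactly the statement that every element of $\wb{D}$ admits an $n$-th root in $\wb{D}$.

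For the second part, I would argue by contradiction. Assuming $K$ is totally disconnected and $D\ne\{1\}$, pick $g\in D\setminus\{1\}$. Van Dantzig's theorem supplies a compact open subgroup $U\sub K$; after replacing $U$ by a smaller one, I may arrange $g\notin U$. Because $K$ is compact, $U$ has finite index, so the core $N:=\bigcap_{k\in K} kUk^{-1}$ is a finite intersection, hence an open normal subgroup of $K$ with $g\notin N$. The quotient $K/N$ is then a nontrivial finite group. Applying the first part of the lemma (or just noting that the image of $D$ under a continuous homomorphism stays divisible), the image $DN/N\sub K/N$ is a divisible finite group.

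The concluding step is the standard observation that a finite group $F$ cannot be divisible unless $F=\{1\}$: every $h\in F$ satisfies $h^{|F|}=1$, so if $F$ is divisible and $g\in F$, writing $g=h^{|F|}$ forces $g=1$. Hence $DN/N=\{1\}$, i.e.\ $D\sub N$, contradicting $g\in D\setminus N$. The only potential obstacle is the first step — one must avoid pretending $p_n$ is a homomorphism — but once the argument is phrased in terms of compactness of $\wb{D}$ and closedness of $p_n(\wb{D})$, everything is straightforward.
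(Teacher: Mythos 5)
Your proof is correct and follows essentially the same route as the paper: divisibility of $\wb{D}$ via compactness (hence closedness) of the image of the $n$-th power map, and triviality of $D$ by passing to finite quotients, where divisibility forces the image to be trivial. The only cosmetic difference is that you derive the profinite structure explicitly from van Dantzig's theorem and normal cores, whereas the paper simply cites that a compact totally disconnected group is profinite and that finite quotients separate points.
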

\begin{proof}
For each $m\in \N$, the map $f_m\colon \wb{D}\to\wb{D}$, $g\mto g^m$
is continuous and hence has compact image.
As the image contains~$D$ by hypothesis, we see that
$f_m(\wb{D})=\wb{D}$. Thus $\wb{D}$ is divisible.\\[3mm]
If $K$ is totally disconnected, then $K$ is a pro-finite group. In
particular, the homomorphisms $f\colon K\to F$ to finite groups $F$
separate points on~$\wb{D}$. But each $f(\wb{D})$ is both finite and
divisible and therefore the trivial group. Hence also
$\wb{D}=\{1\}$.
\end{proof}
\begin{la}\label{torsionfacts}
Let $\alpha$ be an automorphism of
a totally disconnected, locally compact
group~$G$. If $U_\alpha$ can be made locally compact
$($e.g., if $\alpha$ is expansive$)$,
then the following holds:
\begin{itemize}
\item[\rm(a)]
$U_\alpha\cap U_0=T_\alpha\cap U_0$;
\item[\rm(b)]
$U_0=\wb{T_\alpha\cap U_0}$;
\item[\rm(c)]
$\wb{T_\alpha}\cap U_\alpha=T_\alpha$;
\item[\rm(d)]
$\wb{T_\alpha}=T_\alpha U_0$.
\end{itemize}
If both $U_\alpha$ and $U_{\alpha^{-1}}$
can be made locally compact, then we also have:
\begin{itemize}
\item[\rm(e)]
$U_0=\wb{T_\alpha}\cap \wb{T_{\alpha^{-1}}}$.
\end{itemize}
\end{la}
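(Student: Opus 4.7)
The strategy is to prove the items in the order (c), (a), (b), (d), (e); parts (a) and (c) do the real work, while the remaining items then follow by formal manipulation. I write $U_\alpha^*$ for $U_\alpha$ equipped with its refined locally compact topology~$\tau^*$.

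I would dispatch~(c) first. By~\ref{recall3}\,(c) applied to the contractive automorphism $\alpha|_{U_\alpha^*}$, the torsion subgroup $T_\alpha$ has finite exponent~$m$, and~\ref{torsionclosure} then gives $g^m=1$ for every $g\in\wb{T_\alpha}$. Hence any $g\in\wb{T_\alpha}\cap U_\alpha$ is a torsion element of~$U_\alpha$, so lies in~$T_\alpha$, and the reverse inclusion is trivial.

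The crux is~(a). The plan is to set $H:=U_\alpha\cap U_0$ and observe that $H$ is $\alpha$-stable and closed in~$U_\alpha^*$ (because $U_\alpha^*\hookrightarrow G$ is continuous and $U_0$ is closed in~$G$). Then Lemma~\ref{subandquot}\,(a), applied with $U_\alpha$ in the role of~$G$, yields that $H^*$ exists and carries the subspace topology from~$U_\alpha^*$; in particular $H^*$ is a totally disconnected, locally compact contraction group. By~\ref{recall3}\,(c) we therefore have $H^*=\dv(H^*)\times\tor(H^*)$ internally as a topological group. Crucially, $\dv(H^*)$ is an (abstractly) divisible subgroup contained in $H\subseteq U_0$, and Lemma~\ref{compdiv} forces any such subgroup of the compact totally disconnected group~$U_0$ to be trivial. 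Thus $H=\tor(H^*)\subseteq\tor(U_\alpha)=T_\alpha$, proving $U_\alpha\cap U_0\subseteq T_\alpha\cap U_0$; the reverse inclusion is immediate.

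The remaining items then follow easily. For~(b), combine~(a) with the density of $U_\alpha\cap U_0$ in~$U_0$ (Fact~\ref{recall1}\,(c)) to obtain $\wb{T_\alpha\cap U_0}=\wb{U_\alpha\cap U_0}=U_0$. For~(d), the inclusion $T_\alpha U_0\subseteq\wb{T_\alpha}$ is immediate from $T_\alpha\subseteq\wb{T_\alpha}$ and $U_0\subseteq\wb{T_\alpha}$ (by (b)), together with the fact that $\wb{T_\alpha}$ is a subgroup; for the reverse, $\wb{T_\alpha}\subseteq\wb{U_\alpha}=U_\alpha U_0$ (Fact~\ref{recall1}\,(a)) lets me write any $g\in\wb{T_\alpha}$ as $g=uv$ with $u\in U_\alpha$ and $v\in U_0\subseteq\wb{T_\alpha}$, whence $u=gv^{-1}\in\wb{T_\alpha}\cap U_\alpha=T_\alpha$ by~(c), so $g\in T_\alpha U_0$. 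Finally for~(e): applying (b) to both $\alpha$ and $\alpha^{-1}$ gives $U_0\subseteq\wb{T_\alpha}\cap\wb{T_{\alpha^{-1}}}$, while trivially $\wb{T_\alpha}\cap\wb{T_{\alpha^{-1}}}\subseteq\wb{U_\alpha}\cap\wb{U_{\alpha^{-1}}}=U_0$ by~\ref{recall1}\,(b). The one delicate step is the application of Lemma~\ref{subandquot}\,(a) in~(a); once the refined topology on~$H$ is in place, the divisibility argument in the compact totally disconnected group~$U_0$ is essentially automatic.
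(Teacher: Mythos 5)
Your proof is correct and follows essentially the same route as the paper's: part (a) via Lemma~\ref{subandquot}\,(a), the $\dv\times\tor$ decomposition of the contraction group $(U_\alpha\cap U_0)^*$ and Lemma~\ref{compdiv}, with (b), (d) and (e) obtained by the same formal deductions. The only (harmless) deviation is in (c), where you conclude directly from the finite exponent of $\wb{T_\alpha}$ and the definition $T_\alpha=\tor(U_\alpha)$, whereas the paper runs the $D_\beta T_\beta$ decomposition once more inside $\wb{T_\alpha}\cap U_\alpha^*$; both arguments are valid.
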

\begin{proof}
(a) By Lemma~\ref{subandquot}\,(a),
$U_\alpha\cap U_0$ can be made locally compact.
Thus~$\alpha$ restricts to a contractive automorphism~$\beta$
of $(U_\alpha\cap U_0)^*$,
enabling us to write
$(U_\alpha\cap U_0)^*=D_\beta T_\beta$.
Since~$U_0$ is compact and totally disconnected,
its divisible subgroup~$D_\beta$ has to be trivial,
by Lemma~\ref{compdiv}.
Thus $U_\alpha\cap U_0=U_\beta=T_\beta\sub T_\alpha$
and hence $U_\alpha\cap U_0=T_\alpha\cap U_0$.

(b) Since $U_0=\wb{U_\alpha\cap U_0}$ (see \ref{recall1}\,(c)),
the assertion is immediate from (a).

(c) $T_\alpha\sub \wb{T_\alpha}\cap U_\alpha$ is trivial.
Because $T_\alpha$ is a torsion group
of finite exponent (see \ref{recall3}\,(c)), also $\wb{T_\alpha}$
is a torsion group, see \ref{torsionclosure}.
Let~$\beta$ the restriction of $\alpha$
to the closed $\alpha$-stable subgroup
$\wb{T_\alpha}\cap U_\alpha^*$
of $U_\alpha^*$.
Then $\wb{T_\alpha}\cap U_\alpha
=D_\beta T_\beta$.
Since $D_\beta$ is torsion-free (see \ref{recall3}\,(c))
and $\wb{T_\alpha}$ a torsion group, $D_\beta=\{1\}$
follows. Thus $\wb{T_\alpha}\cap U_\alpha=T_\beta\sub T_\alpha$.

(d) We have $\wb{T_\alpha}\sub \wb{U_\alpha} = U_\alpha U_0$ (see
1.2 (a)). Since $U_0\sub \wb{T_\alpha}$ by~(b), we deduce that
$\wb{T_\alpha}=(U_\alpha \cap \wb{T_\alpha})U_0=T_\alpha U_0$
(using~(c) for the last equality).

(e) We have $U_0=\wb{U_\alpha}\cap \wb{U_{\alpha^{-1}}}\supseteq
\wb{T_\alpha}\cap \wb{T_{\alpha^{-1}}}$ (see \ref{recall1}\,(b))
and $U_0=\wb{T_\alpha\cap U_0}\cap \wb{T_{\alpha^{-1}}\cap U_0}
\sub \wb{T_\alpha}\cap \wb{T_{\alpha^{-1}}}$
(using~(b)).
\end{proof}
\begin{la}\label{sigmacomp}
Let $\alpha$ be an automorphism of a locally compact group~$G$ and
$H\sub G$ be an $\alpha$-stable subgroup such that $\alpha|_H$ is
contractive. Then the closure $\wb{H}\sub G$ is $\sigma$-compact.
\end{la}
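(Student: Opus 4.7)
My plan is to trap $H$ inside an open, $\sigma$-compact subgroup $S\sub G$. Since open subgroups of topological groups are also closed, this forces $\wb H\sub S$, and $\wb H$ is then a closed subset of the $\sigma$-compact Hausdorff space $S$, hence itself $\sigma$-compact (intersect with any compact exhaustion of $S$).

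To build $S$, I would first fix a compact symmetric identity neighbourhood $K\sub G$ and set $V_n:=\alpha^{-n}(K)$ for $n\in\N_0$. As $\alpha^{-1}$ is a homeomorphism, each $V_n$ is compact and symmetric. Contractivity of $\alpha|_H$ then provides, for each $h\in H$, some $n$ with $\alpha^n(h)\in K$, equivalently $h\in V_n$; hence $H\sub\bigcup_{n\in\N_0}V_n$.

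Now let $S$ be the subgroup of $G$ generated by $\bigcup_{n\in\N_0}V_n$. Since $K\sub S$ is an identity neighbourhood, $S$ is open, and therefore also closed. Putting $L_k:=V_0\cup\cdots\cup V_k$, each $L_k$ is compact and symmetric, and every element of $S$ is a product of finitely many elements of some $L_k$, so
\[
S \;=\; \bigcup_{k,m\in\N}L_k^m,
\]
a countable union of compact sets. Thus $S$ is $\sigma$-compact, and combined with $\wb H\sub S$ this yields the claim.

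The only subtlety worth flagging is that contractivity of $\alpha|_H$ does \emph{not} in general extend to $\wb H$, so one cannot simply rerun the pointwise argument $\alpha^n(h)\to 1$ on the closure. That is precisely why I enlarge the $\sigma$-compact set $\bigcup_n V_n$ to the \emph{subgroup} it generates before taking closures: this ensures $\wb H$ remains trapped inside a $\sigma$-compact set. Beyond this observation the argument is routine.
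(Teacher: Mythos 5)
Your proof is correct and follows essentially the same route as the paper's: both arguments trap $H$ inside the ($\sigma$-compact, open, hence closed) subgroup generated by the iterates $\alpha^{-n}(K)$ of a compact identity neighbourhood, using contractivity of $\alpha|_H$ only to place each $h\in H$ in some $\alpha^{-n}(K)$. The only cosmetic difference is that the paper works inside $\wb{H}$ with the two-sided orbit $\bigcup_{n\in\Z}\alpha^n(K)$ and concludes $S=\wb{H}$ by density, whereas you work in $G$ and finish by noting that a closed subset of a $\sigma$-compact group is $\sigma$-compact.
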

\begin{proof}
Let $K\sub \wb{H}$
be a compact identity neighbourhood.
Then $\bigcup_{n\in \Z}\alpha^n(K)$ is a $\sigma$-compact
subset of $\wb{H}$ and generates
a $\sigma$-compact subgroup~$S$ of~$\wb{H}$.
Since~$S$ is an $\alpha$-stable open subgroup of~$\wb{H}$
and $\alpha|_H$ is contractive,
we have $H\sub S$ and thus $S=\wb{H}$
(since~$S$ is closed). Hence $\wb{H}=S$
is $\sigma$-compact.
\end{proof}
\section{Proof of Theorem A}\label{autquot}
Let $G$ be a totally disconnected, locally compact group, $\alpha$
be an expansive automorphism of $G$ and $N\sub G$ be an
$\alpha$-stable, closed normal subgroup. Let $q\colon G\to G/N$ be
the canonical quotient morphism and $\wb{\alpha}$ be the
automorphism of $G/N$ induced by~$\alpha$ (determined by
$\wb{\alpha}\circ q=q\circ\alpha$). Then $\alpha|_N$ is expansive,
by Lemma~\ref{basicexpa}\,(e). We show that also $\wb{\alpha}$ is
expansive. By Proposition~\ref{givesexp}\,(a), we need only show
that $\wb{\alpha}$ restricts to an expansive automorphism
of~$M_{\wb{\alpha}}$. After replacing~$G$ with $q^{-1}(M_{\wb{\alpha}})$
(which is closed since~$M_{\wb{\alpha}}$ is closed), we may assume
that $G/N=M_{\wb{\alpha}}$. Let~$U$ be a subgroup of $G/N$ tidy
for~$\wb{\alpha}$. Then $U=U_+=U_-$ as $\wb{\alpha}^\Z(g)$ is
relatively compact for each $g\in U$ (cf.\ \cite[Lemma~9]{Wi1}) and
thus~$U$ is an $\wb{\alpha}$-stable, compact open subgroup of~$G/N$.
After replacing $G$ with~$q^{-1}(U)$, we may assume that~$G/N$ is
compact. Using \cite[Proposition~5.1]{Wi3} and the metrizability of
$G/N$, we find a descending sequence $(H_n)_{n\in\N}$ of
$\wb{\alpha}$-stable closed normal subgroups~$H_n$ of $G/N$ such
that~$\wb{\alpha}$ induces an expansive automorphism~$\alpha_n$ on
$(G/N)/H_n$ for each $n\in \N$ and $G/N$ is the projective limit
$G/N=\pl\, (G/N)/H_n$.\vspace{-.3mm} Set $L_n:=q^{-1}(H_n)$; then
$(L_n)_{n\in\N}$ is a descending sequence of $\alpha$-stable closed
normal subgroups
of~$G$, with $\bigcap_{n\in\N}L_n=N$.\\[3mm]
\emph{There exists $m\in\N$ such that
$L_n$ is open in $L_m$ for all $n\geq m$.}
Indeed, if this was wrong, we could find
a subsequence $(L_{n_k})_{k\in\N}$
such that, for each $k\in\N$, the normal subgroup
$L_{n_{k+1}}$ is not open in $L_{n_k}$.
This contradicts Proposition~\ref{almostmax}.\\[3mm]
After passing to a subsequence, we may assume that~$L_n$ is open
in~$L_1$ for each $n\in\N$. Hence $L_n$ contains both $U_\alpha\cap
L_1$ and $U_{\alpha^{-1}}\cap L_1$. As a consequence,
$N=\bigcap_{n\in\N}L_n$ contains both $U_\alpha\cap L_1$ and
$U_{\alpha^{-1}}\cap L_1$. Hence~$N$ is open in~$L_1$ (and in each
$L_n$), using that $\alpha|_{L_1}$ is expansive and thus
$(U_\alpha\cap L_1)(U_{\alpha^{-1}}\cap L_1)$ an open subset of~$L_1$ (see
\ref{basicexpa} (d) and (e)). This implies that the compact group
$H_1\cong L_1/N$ is discrete and hence a finite group. Since
$H_1\supseteq H_2\supseteq\cdots$ with $\bigcap_{n\in\N}H_n=\{1\}$,
we deduce that $H_n=\{1\}$ for some~$n$. Since~$\bar{\alpha}$
corresponds to the expansive automorphism~$\alpha_n$ on
$(G/N)/H_n\cong G/N$, we see that~$\bar{\alpha}$ is expansive.\\[3mm]
Conversely, assume that both $\alpha|_N$ and $\wb{\alpha}\colon G/N\to G/N$
are expansive.\footnote{Compare also \cite[Proposition~6.1]{Wi3}.
The compactness of~$G$ assumed there is inessential
for this part of the proof of~\cite[Proposition~6.1]{Wi3}.}
Then there is an open identity neighbourhood $P\sub G/N$
such that $\bigcap_{n\in \Z}\wb{\alpha}^n(P)=\{1\}$,
and an open identity neighbourhood $Q\sub N$ such that
$\bigcap_{n\in \Z}\alpha^n(Q)=\{1\}$.
After shrinking~$Q$, we may assume that $Q\sub q^{-1}(P)$.
Then $Q= N\cap V$ for some open identity neighbourhood
$V\sub G$. After replacing~$V$ with $V\cap q^{-1}(P)$,
we may assume that $V\sub q^{-1}(P)$.
We have $N=q^{-1}(\bigcap_{n\in \Z}\wb{\alpha}^n(P))
=\bigcap_{n\in \Z}\alpha^n(q^{-1}(P))$, entailing
that $I:=\bigcap_{n\in \Z}\alpha^n(V)$ is an
$\alpha$-stable subset of~$N$.
Since $I=I\cap N\sub V\cap N=Q$,
we deduce that $I=\bigcap_{n\in\Z}\alpha^n(I)
\sub\bigcap_{n\in\Z}\alpha^n(Q)=\{1\}$.
Hence $I=\{1\}$ and~$\alpha$ is expansive.\Punkt
\section{Proof of Theorem B}
The following lemma is useful.
\begin{la}\label{twocases}
Let $\alpha$ be an expansive automorphism
of a totally disconnected, locally compact group~$G$
such that every $\alpha$-stable, closed normal
subgroup $N\sub G$ is open or discrete.
Let $C\sub G$ be the topologically characteristic
subgroup\footnote{Thus $C$ is the subgroup
generated by $\bigcup_{\beta\in \Aut(G)}\beta(U_\alpha\cup U_{\alpha^{-1}})$.}
of~$G$ generated by $U_\alpha\cup U_{\alpha^{-1}}$.
Let $\wb{[C,C]}$ be the closure of the commutator group of~$C$.
Then~$C$ is an open normal subgroup of~$G$,
and one of the following cases occurs:
\begin{itemize}
\item[\rm(a)]
$\wb{[C,C]}$
is open in~$G$,
in which case
$C=\wb{[C,C]}$
is topologically perfect.
\item[\rm(b)]
$\wb{[C,C]}$ is discrete.
\end{itemize}
\end{la}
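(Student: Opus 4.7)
The plan is to identify $\wb{[C,C]}$ as an $\alpha$-stable closed normal subgroup of $G$, apply the hypothesis to split into the two cases, and handle the open case by a contraction-plus-discreteness argument in the quotient $C/\wb{[C,C]}$. First, Lemma~\ref{basicexpa}(d) shows $U_\alpha U_{\alpha^{-1}}$ is open in~$G$, and since $C \supseteq U_\alpha U_{\alpha^{-1}}$ the subgroup~$C$ is open; it is normal in~$G$ because it is topologically characteristic by construction. Moreover, since any automorphism of~$G$ restricts to an automorphism of~$C$ and permutes commutators, $[C,C]$ is topologically characteristic in~$G$, hence so is its closure $\wb{[C,C]}$. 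In particular $\wb{[C,C]}$ is an $\alpha$-stable closed normal subgroup of~$G$, so the hypothesis forces it to be open or discrete --- this is precisely the alternative (a) versus (b), and case (b) needs no further argument.

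Assume now that $\wb{[C,C]}$ is open, and let $q\colon C \to A := C/\wb{[C,C]}$ be the canonical projection; note that $A$ is a discrete abelian group. Because $\wb{[C,C]}$ is $\alpha$-stable, $\alpha$ descends to an automorphism $\bar\alpha$ of~$A$. For $g \in U_\alpha$, the convergence $\alpha^n(g)\to 1$ in~$G$ yields
\[
\bar\alpha^n(q(g)) \;=\; q(\alpha^n(g)) \;\longrightarrow\; 0 \quad\text{in } A .
\]
Discreteness of~$A$ forces $\bar\alpha^n(q(g)) = 0$ for all sufficiently large~$n$, and bijectivity of $\bar\alpha$ gives $q(g)=0$. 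Thus $U_\alpha \sub \wb{[C,C]}$, and symmetrically $U_{\alpha^{-1}} \sub \wb{[C,C]}$.

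To conclude $C = \wb{[C,C]}$, it suffices to show $\beta(U_\alpha) \cup \beta(U_{\alpha^{-1}}) \sub \wb{[C,C]}$ for every $\beta\in\Aut(G)$, since these sets generate~$C$ (see the footnote in the statement). Fix $\beta$; because $\wb{[C,C]}$ is topologically characteristic (not merely $\alpha$-stable), the conjugate $\beta\alpha\beta^{-1} \in \Aut(G)$ also descends to an automorphism of~$A$. Repeating the contraction-plus-discreteness argument for $\beta\alpha\beta^{-1}$ on $\beta(U_\alpha) = U_{\beta\alpha\beta^{-1}}$ gives the desired inclusion, and the analogue with $\alpha^{-1}$ handles $\beta(U_{\alpha^{-1}})$. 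Hence $C \sub \wb{[C,C]}$, yielding $C = \wb{[C,C]}$ and the topological perfectness of~$C$. The only genuine subtlety, which I need to track carefully, is the upgrade from $\alpha$-stability to topological characteristicity of $\wb{[C,C]}$: without it, the generating elements of $C$ arising as $\beta$-images could not be pushed into $\wb{[C,C]}$ by the same contraction argument.
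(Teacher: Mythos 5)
Your proposal is correct and follows essentially the same route as the paper: both identify $\wb{[C,C]}$ as topologically characteristic in~$G$ (hence $\alpha$-stable, closed and normal), invoke the open-or-discrete dichotomy, and in the open case observe that an open $\alpha$-stable subgroup must absorb $U_\alpha$ and $U_{\alpha^{-1}}$. The paper shortcuts your generator-by-generator verification (via the discrete quotient and the conjugated automorphisms $\beta\alpha\beta^{-1}$) by simply noting that $\wb{[C,C]}$ is then a topologically characteristic subgroup containing $U_\alpha\cup U_{\alpha^{-1}}$, so minimality of~$C$ gives $C\sub\wb{[C,C]}$.
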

\begin{proof}
$C$ is an open subgroup of~$G$ as it
contains the open set $U_\alpha U_{\alpha^{-1}}$ (see \ref{basicexpa}\,(d)).
The closed subgroup $\wb{[C,C]}$ is topologically characteristic
in~$C$, whence it is topologically characteristic
in~$G$ and hence $\alpha$-stable and normal.
Therefore $\wb{[C,C]}$ is open or discrete.
If $\wb{[C,C]}$ is open, then it contains
$U_\alpha\cup U_{\alpha^{-1}}$.
Hence $\wb{[C,C]}=C$, using that~$C$
is the smallest topologically characteristic subgroup of~$G$
which contains $U_\alpha\cup U_{\alpha^{-1}}$.
\end{proof}
\noindent{\bf Proof of Theorem B.}
Define $\ell_{\alpha}$ and $\ell_{\alpha^{-1}}$ as in
Proposition~\ref{almostmax}.
For every series
\[
\Sigma\colon G=G_0\rhd G_1\rhd\cdots\rhd G_n=\{1\}
\]
of $\alpha$-stable closed subgroups of~$G$, let
$J_\Sigma$ be the set
of all $j\in \{1,\ldots,n\}$ such that $G_{j-1}/G_j$
is not discrete.
Then $J_\Sigma\leq
\ell_{\alpha}+\ell_{\alpha^{-1}}$,
by Proposition~\ref{almostmax},
entailing that the maximum
\[
m:=\max_\Sigma J_\Sigma
\]
over all series $\Sigma$
exists. Let $\Sigma\colon G=G_0\rhd G_1\rhd\cdots\rhd G_n=\{1\}$
be a series with $J_\Sigma=m$.
Let $N\sub G_{j-1}$ be an $\alpha$-stable closed
normal subgroup with $G_j\sub N$.
If neither $G_{j-1}/N$ nor $N/G_j$ was discrete,
we would have $J_{\Sigma\cup \{N\}}=J_\Sigma+1$,
a contradiction. Thus~$N$ will be open in~$G_{j-1}$
or~$G_j$ open in~$N$.\\[3mm]
For $j\in J_\Sigma$, let $q_j\colon G_{j-1}\to G_{j-1}/G_j$
be the canonical quotient morphism,\linebreak
$\alpha_j\colon
G_{j-1}/G_j \to G_{j-1}/G_j$ be the automorphism
induced by~$\alpha$
and $C_j\subseteq G_{j-1}/G_j$
be the topologically characteristic
subgroup generated by $U_{\alpha_j}\cup U_{\alpha_j^{-1}}$.
If $\wb{[C_j,C_j]}$ is open in $G_{j-1}/G_j$,
define $M_j:=N_j:=q_j^{-1}(C_j)$;
thus $G_{j-1}/M_j\cong (G_{j-1}/G_j)/C_j$
and $M_j/N_j=\{1\}$ are discrete
and $N_j/G_j\cong C_j$ is topologically perfect.
If $\wb{[C_j,C_j]}$ is discrete, we
define $M_j:=q_j^{-1}(C_j)$
and $N_j:=q_j^{-1}(\wb{[C_j,C_j]})$
(by Lemma~\ref{twocases}, only these two cases
can occur).
Then $G_{j-1}/M_j\cong (G_{j-1}/G_j)/C_j$
is discrete, $M_j/N_j\cong C_j/\wb{[C_j,C_j]}$
is abelian and $N_j/G_j\cong \wb{[C_j,C_j]}$
is discrete.
Hence
\[
\Sigma':=\Sigma\cup \bigcup_{j\in J_\Sigma}\{M_j,N_j\}
\]
is a series of $\alpha$-stable closed subnormal
subgroups such that all non-discrete subfactors
are abelian or topologically perfect.
Since $\# J_{\Sigma'}=\# J_\Sigma$ is maximal,
all non-discrete subfactors of $\Sigma'$
have the property that all stable closed
normal subgroups are open or discrete.\Punkt\vspace{3mm}

\noindent
Using the recent theory of elementary groups~\cite{Wes},
slightly more detailed information on the factor groups can be obtained,
in the case of second countable groups.
Recall that the class of
elementary groups is the smallest class of totally disconnected,
second
countable, locally compact groups that contains all countable discrete groups and all second countable pro-finite groups,
and is closed under extensions as well as countable increasing unions. A totally disconnected, second countable, locally compact group $G$ is called \emph{elementary-free}
if all of its elementary closed normal subgroups and all of its
elementary Hausdorff quotient groups are trivial \cite[Definition~7.14]{Wes}.
If $\alpha$ is an expansive automorphism of a totally disconnected, locally compact non-trivial group~$G$
and $G$ does not have closed $\alpha$-stable subgroups
except for $G$ and $\{1\}$, then $(G,\alpha)$ is called a \emph{simple expansion group}.
Note that if $G$ is a non-trivial elementary-free group and $\alpha$ an expansive automorphism
of~$G$ such that every $\alpha$-stable closed normal subgroup of $G$ is discrete
or open, then $(G,\alpha)$ is a simple expansion group. We remark:
\begin{rem}\label{newremark}
If $G$ is second countable in Theorem B, then one can achieve there that each of the quotient groups $G_{j-1}/G_j$ is discrete, abelian,
both topologically perfect and elementary, or an elementary-free simple expansion group.\\[2.3mm]
In fact, let us consider a topologically perfect factor $Q:=G_{j-1}/G_j$ in a series
all of whose factors are discrete, abelian, or topologically perfect, and which has a maximum number
of non-discrete factors.
By \cite[Theorem 7.15]{Wes}, there are topologically characteristic, closed subgroups
$D_1$ and $D_2$ of $Q$ such that $Q \supseteq D_1 \supseteq D_2 \supseteq \{1\}$
and, moreover, both $D_2$ and $Q/D_1$ are elementary and $D_1/D_2$ is elementary-free.
Let $N_1$ and $N_2$ be the pre-images of $D_1$ and $D_2$, respectively,
under the quotient morphism $G_{j-1}\to G_{j-1}/G_j$. Then $N_1$ and $N_2$ are $\alpha$-stable
closed normal subgroups of $G_{j-1}$, and $G_{j-1}\rhd N_1\rhd  N_2\rhd  G_j$.

Case 1:
If $D_1/D_2$ is non-trivial, then the elementary-free group $N_1/N_2\cong D_1/D_2$ is non-discrete (as it would be elementary otherwise),
whence $N_2$ is not open in $N_1$. Hence $N_2$ is not open in $G_{j-1}$
and hence $N_2/G_j$ is discrete (by maximailty of the number of non-discrete factors).
Again by maximality,
$G_{j-1}/N_1$ is discrete and $N_1/N_2$ does not have closed normal subgroups
stable under the induced expansive automorphism other than open or discrete subgroups.
So, the elementary-free group $N_1/N_2$ is a simple expansion group.

Case 2: If $D_1/D_2$ is trivial, then the topologically perfect group $G_{j-1}/G_j=Q\rhd D_1=D_2\rhd \{1\}$ is
elementary, as any extension of elementary groups. 
\end{rem}
\section{Proof of Theorem C}
Since $U_0\sub \wb{U_\alpha}$,
we may replace~$G$ with $\wb{U_\alpha}$
without changing the nub (see~\ref{nochange}), or
$\wb{T_\alpha}$, or $D_\alpha$.
We may therefore assume that~$U_\alpha$ is dense in~$G$.
Since~$U_0$ normalizes~$U_\alpha$ (see \ref{recall1}\,(f)),
$U_\alpha$ is a normal subgroup
of $G=U_\alpha U_0$ (exploiting \ref{recall1}\,(a)).
Hence also the characteristic subgroups~$D_\alpha$ and~$T_\alpha$
of~$U_\alpha$ are normal in~$G$.
Therefore also $\wb{T_\alpha}$ is normal in~$G$.
Since $\wb{T_\alpha}$ is a torsion group
(see \ref{recall3}\,(c) and \ref{torsionclosure})
and $D_\alpha$ torsion-free (see \ref{recall3}\,(c)),
we see that $D_\alpha\cap \wb{T_\alpha}=\{1\}$.
Moreover, using that $T_\alpha U_0=\wb{T_\alpha}$ by
Lemma~\ref{torsionfacts}\,(d),
we obtain $G=U_\alpha U_0=D_\alpha T_\alpha U_0=D_\alpha\wb{T_\alpha}$.
Hence $G=D_\alpha\times \wb{T_\alpha}$
as an abstract group. In particular, $D_\alpha$ centralizes
$\wb{T_\alpha}$. Thus $D_\alpha$ also centralizes
$U_0\sub \wb{T_\alpha}$.
Since $\wb{T_\alpha}$ is $\sigma$-compact by Lemma~\ref{sigmacomp},
also
$D_\alpha^* \times \wb{T_\alpha}$ is a $\sigma$-compact
locally compact group
(writing $D_\alpha^*$ for $D_\alpha$, endowed with the locally compact
topology induced by $U_\alpha^*$).
Because also~$G$ is locally compact and the product
map $\pi\colon D_\alpha^* \times \wb{T_\alpha}\to G$, $(x,y)\mto xy$
is a continuous isomorphism of abstract groups,
we deduce with \cite[5.29]{HaR}
that~$\pi$ is an isomorphism
of topological groups. Hence $G=D_\alpha\times\wb{T_\alpha}$
(internally). In particular, $D_\alpha$ is closed in~$G$.\Punkt
\begin{cor}\label{trivsect}
Let $G$ be a totally disconnected,
locally compact group and $\alpha$ be an automorphism of~$G$
such that $U_\alpha$ and $U_{\alpha^{-1}}$ can be made locally compact
$($e.g., any expansive automorphism$)$.
Then $D_\alpha\cap D_{\alpha^{-1}}=\{1\}$.
\end{cor}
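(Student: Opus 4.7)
My plan is to deduce this directly from Theorem~C together with \ref{recall1}(b).

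First I would observe that $D_\alpha \sub \wb{U_\alpha}$ and $D_{\alpha^{-1}} \sub \wb{U_{\alpha^{-1}}}$, so
\[
D_\alpha \cap D_{\alpha^{-1}} \sub \wb{U_\alpha}\cap \wb{U_{\alpha^{-1}}} = U_0
\]
by \ref{recall1}(b). The problem thus reduces to showing that $D_\alpha \cap U_0 = \{1\}$ (and by symmetry we are done).

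For this I would invoke Theorem~C applied to $\alpha$: it gives the internal direct product decomposition $\wb{U_\alpha} = D_\alpha \times \wb{T_\alpha}$ and the identity $\wb{T_\alpha} = T_\alpha U_0$. In particular $U_0 \sub \wb{T_\alpha}$, so
\[
D_\alpha \cap U_0 \;\sub\; D_\alpha \cap \wb{T_\alpha} \;=\; \{1\},
\]
where the last equality is immediate from the directness of the product decomposition. Combining these two inclusions yields $D_\alpha \cap D_{\alpha^{-1}} = \{1\}$.

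There is no real obstacle here: the content has been packaged into Theorem~C. The only point to notice is that Theorem~C is applicable to both $\alpha$ and $\alpha^{-1}$, since by hypothesis both $U_\alpha$ and $U_{\alpha^{-1}}$ can be made locally compact (and the parenthetical remark that an expansive $\alpha$ meets this hypothesis is ensured by Lemma~\ref{expalcp}). Strictly speaking only the $\alpha$-version of Theorem~C is needed in the argument above, but the $\alpha^{-1}$-version guarantees that $D_{\alpha^{-1}}$ is a meaningful (closed) subgroup of~$G$ in the first place.
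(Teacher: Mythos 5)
Your proof is correct, but it takes a different route from the paper's. The paper uses Theorem~C only to know that $D_\alpha$ and $D_{\alpha^{-1}}$ are closed in~$G$; it then sets $H:=D_\alpha\cap D_{\alpha^{-1}}$, notes that~$H$ is a totally disconnected, locally compact group on which both $\alpha|_H$ and $\alpha^{-1}|_H$ are contractive (since $H\sub U_\alpha\cap U_{\alpha^{-1}}$ is closed and $\alpha$-stable), and derives a contradiction from the module formula in \ref{recall3}\,(a): if $H\not=\{1\}$, both $\Delta_H(\alpha|_H)$ and $\Delta_H(\alpha|_H)^{-1}$ would be integers $\geq 2$. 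You instead exploit the finer structural content of Theorem~C --- the internal decomposition $\wb{U_\alpha}=D_\alpha\times\wb{T_\alpha}$ and the identity $\wb{T_\alpha}=T_\alpha U_0$ --- together with $U_0=\wb{U_\alpha}\cap\wb{U_{\alpha^{-1}}}$ from \ref{recall1}\,(b), to trap $D_\alpha\cap D_{\alpha^{-1}}$ inside $D_\alpha\cap U_0\sub D_\alpha\cap\wb{T_\alpha}=\{1\}$. Each step of yours checks out ($D_\alpha\sub U_\alpha\sub\wb{U_\alpha}$, $U_0\sub T_\alpha U_0=\wb{T_\alpha}$, and directness of the product gives $D_\alpha\cap\wb{T_\alpha}=\{1\}$), and your hypotheses are used correctly. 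The trade-off: the paper's argument is self-contained at the level of contraction-group moduli and in fact shows the stronger statement that \emph{every} closed $\alpha$-stable subgroup contained in $U_\alpha\cap U_{\alpha^{-1}}$ is trivial, whereas your argument records the (also useful) intermediate fact that $D_\alpha$ meets the nub trivially, at the cost of invoking the full strength of Theorem~C.
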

\begin{proof}
Since $D_\alpha$ and $D_{\alpha^{-1}}$
are closed and $\alpha$-stable, their intersection
$H:=D_\alpha\cap D_{\alpha^{-1}}$
is a totally disconnected, locally compact contraction group
for both $\alpha|_H$ and $\alpha^{-1}|_H$.
Hence $H=\{1\}$. Indeed, if $H\not=\{1\}$
then both $\Delta_H(\alpha|_H)$
and $\Delta_H(\alpha|_H^{-1})=\Delta_H(\alpha|_H)^{-1}$
would be integers $\geq 2$ (see \ref{recall3}\,(a)),
which is impossible.
\end{proof}
\begin{rem}\label{notnormal}
We mention that the nub $U_0$ of an expansive automorphism
$\alpha\colon G\to G$ need not have an open normalizer in~$G$. To
see this, let~$F$ be a finite group which is a semidirect product
$F=N\rtimes H$ of a normal subgroup~$N$ and a subgroup~$H$ which is
not normal in~$F$ (e.g., $F$ might be the dihedral group $C_3\rtimes
C_2$). Let~$G$ be the group of all $(n_k,h_k)_{k\in\Z}\in F^\Z$ such
that $(n_k)_{k\in\Z}\in N^{(-\N)}\times N^{\N_0}=:M$. Thus
$G=M\rtimes H^\Z$ as an abstract group. Endow~$G$ with the topology
making it the direct product topological space of the restricted
product~$M$ and the compact group~$H^\Z$. Then~$G$ is a topological
group, being the ascending union of the open subgroups
\[
H^{\{k\in\Z\colon k<{-m}\}}
\times F^{\{k\in\Z\colon k\geq {-m}\}}
\]
for $m\in \N$, which are topological groups.
The right-shift~$\alpha$ is an automorphism of~$G$.
We have
$U_\alpha=M\rtimes (H^{(-\N)}\times H^{\N_0})$
and $U_{\alpha^{-1}}= H^{-\N}\times H^{(\N_0)}$.
Thus $\wb{U_\alpha}=G$,
$\wb{U_{\alpha^{-1}}}=H^\Z$
and $U_0=\wb{U_\alpha}\cap \wb{U_{\alpha^{-1}}}=H^\Z$
(using \ref{recall1}\,(b)).
Since~$H$ is not normal in~$F$,
we see that $U_0=H^\Z$ is not normal in~$G$.
If the normalizer $N_G(U_0)$ was open in~$G$,
then (being $\alpha$-stable), it would contain
the dense subgroup~$U_\alpha$
of~$G$ and hence coincide with~$G$
(a contradiction). Thus $N_G(U_0)$ is not open.
\end{rem}
\section{Abelian expansion groups}
We show that,
after passing to a refinement if necessary,
only abelian, non-discrete
groups of a special form
will occur in Theorem~B.
\begin{rem}\label{nicerabel}
In the situation of
Theorem~B,
let~$I$ be the set of all indices $j\in \{1,\ldots, n\}$
such that $G_{j-1}/G_j$ is abelian and non-discrete.
Let $\alpha_j$ be the automorphism of $G_{j-1}/G_j$
induced by~$\alpha$
and $q_j\colon G_{j-1}\to G_{j-1}/G_j$ be the
quotient homomorphism, for $j\in I$.
Then $U_{\alpha_j}U_{\alpha_j^{-1}}$
is an open $\alpha_j$-stable subgroup of $G_{j-1}/G_j$
and hence $H_j:=q^{-1}_j(U_{\alpha_j}U_{\alpha_j^{-1}})$
is an $\alpha$-stable open normal subgroup of $G_{j-1}$.
Then $G_{j-1}/H_j$ is discrete and
all stable, closed, proper subgroups of~$H_j/G_j$
are discrete.
After inserting the~$H_j$ into the series for all $j\in I$,
we may thus assume without loss of generality
that all abelian, non-discrete subfactors
$G_{j-1}/G_j$ have the property that all
of their $\alpha_j$-stable, closed, proper subgroups
are discrete, and that $G_{j-1}/G_j
=U_{\alpha_j}U_{\alpha_j^{-1}}$.
\end{rem}
Let $G_j$ be a topological group and $\alpha_j\in \Aut(G_j)$
for $j\in \{1,2\}$. We say that $(G_1,\alpha_1)$
and $(G_2,\alpha_2)$ are isomorphic if there exists
an isomorphism $\phi\colon G_1\to G_2$
of topological groups such that $\alpha_2\circ \phi=\phi\circ\alpha_1$.
\begin{prop}\label{abeliancase}
Let $A\not=\{1\}$ be an abelian, totally disconnected, locally compact
group and $\alpha\colon A\to A$ be an expansive
automorphism. Assume that $A=U_\alpha U_{\alpha^{-1}}$ and assume
that every $\alpha$-stable proper closed subgroup of~$A$ is
discrete. Then there exists a prime number~$p$ such that
$(A,\alpha)$ isomorphic to one of the following:
\begin{itemize}
\item[\rm(a)]
$\Q_p^n$ for some $n\in \N$, together
with a contractive linear
automorphism $\beta\colon \Q_p^n\to\Q_p^n$
not admitting non-trivial proper
$\beta$-stable vector subspaces;
\item[\rm(b)]
$\Q_p^n$ for some $n\in \N$, together
with $\beta^{-1}$ for a contractive linear
automorphism $\beta\colon \Q_p^n\to\Q_p^n$
not admitting non-trivial proper
$\beta$-stable vector subspaces;
\item[\rm(c)]
$C_p^{(-\N)}\times C_p^{\N_0}$
with the right-shift;
\item[\rm(d)]
$C_p^{(-\N)}\times C_p^{\N_0}$
with the left-shift;
\item[\rm(e)]
$C_p^\Z$ with the right-shift.
\end{itemize}
\end{prop}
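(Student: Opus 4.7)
The plan is to dichotomize on the contraction groups $U_\alpha$ and $U_{\alpha^{-1}}$. Since $A$ is abelian, their closures $\wb{U_\alpha}$ and $\wb{U_{\alpha^{-1}}}$ are closed $\alpha$-stable subgroups, hence by hypothesis each is $A$ or discrete; and a discrete contraction group must be trivial, because $\alpha^n(h)\to 1$ in a discrete group forces $\alpha^n(h)=1$ eventually, whence $h=1$. Together with $A=U_\alpha U_{\alpha^{-1}}\neq\{1\}$, this produces three cases: (I) $U_\alpha=A$, so $\alpha$ is contractive; (II) $U_{\alpha^{-1}}=A$, so $\alpha^{-1}$ is contractive; and (III) $\wb{U_\alpha}=A=\wb{U_{\alpha^{-1}}}$, which forces the nub $U_0=A$ and hence $A$ compact.

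In case~(I), \ref{recall3}\,(c) decomposes $A=D_\alpha\times T_\alpha$ with both factors closed and $\alpha$-stable, and the hypothesis (together with triviality of discrete contraction groups) forces one factor to equal $A$. If $A=D_\alpha$, I would appeal to the structure theory of torsion-free divisible locally compact abelian contraction groups from \cite{Sie}, \cite{Wan}, \cite{GaW} to identify $A$ with a single $\Q_p$-vector space $\Q_p^n$ on which $\alpha$ acts as a contractive $\Q_p$-linear automorphism $\beta$; the hypothesis then prohibits non-trivial proper $\beta$-stable vector subspaces (such a subspace being closed but, unless trivial, never discrete), giving case~(a). If $A=T_\alpha$, I first reduce to prime exponent $p$ by noting that $pA$ is closed and $\alpha$-stable while a $p$-divisible abelian group of finite exponent is trivial; the analogous classification of simple abelian torsion contraction groups of exponent $p$ then identifies $A$ with $\mathbb{F}_p((t))\cong C_p^{(-\N)}\times C_p^{\N_0}$ on which $\alpha$ acts by multiplication by $t$, i.e.\ the right-shift, giving case~(c). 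Case~(II) is symmetric under $\alpha\leftrightarrow\alpha^{-1}$ and yields (b) or (d).

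In case~(III), \ref{conlarge}\,(b) gives that $A$ is torsion of finite exponent, which I reduce to prime exponent $p$: the primary decomposition $A=\prod_q A_q$ together with the hypothesis forces a single prime $p$ to occur (each $A_q$ is closed and $\alpha$-stable, and $A$ is infinite, since a finite $A$ would give $U_\alpha=U_{\alpha^{-1}}=\{1\}$); and then, since $pA$ and $A[p]$ are closed $\alpha$-stable with $A/A[p]\cong pA$, the hypothesis combined with $A$ infinite gives $pA=\{0\}$. Now $\hat{A}$ is a discrete $\mathbb{F}_p$-vector space and an $\mathbb{F}_p[t,t^{-1}]$-module via $t=\hat\alpha$, and Pontryagin duality translates the hypothesis into the statement that every nonzero submodule of $\hat A$ has finite index. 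Picking any nonzero $x\in\hat A$, the cyclic submodule $\mathbb{F}_p[t,t^{-1}]\cdot x$ has finite index, so $\hat A$ is finitely generated; and it is also torsion-free, since a torsion element would span a nonzero finite submodule of infinite index. Over the PID $\mathbb{F}_p[t,t^{-1}]$ such a module is free of rank~$1$, so $\hat A\cong C_p^{(\Z)}$ and dually $A\cong C_p^\Z$ with the shift, giving case~(e) (the involution $(a_n)\mapsto(a_{-n})$ intertwines left- and right-shift, so we may arrange for the right-shift).

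The main obstacle I anticipate is extracting the precise simple models in cases (I) and (II) from the existing structure theory: the classifications of \cite{Sie}, \cite{Wan}, \cite{GaW} exhibit divisible and torsion contraction groups as restricted products over primes of $\Q_p$-vector-space pieces and $\mathbb{F}_p((t))$-module pieces respectively, and distilling our hypothesis into ``single prime, rank one over the coefficient ring, and (in the divisible case) no proper invariant subspace'' requires a careful invocation of those classifications alongside the reductions to a single prime and to prime exponent sketched above. By comparison, the compact case~(III) is more self-contained, relying only on \ref{conlarge}\,(b), Pontryagin duality, and the elementary theory of modules over the PID $\mathbb{F}_p[t,t^{-1}]$.
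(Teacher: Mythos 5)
Your proposal is correct, and it follows a genuinely different organisation from the paper's proof. The paper first asks whether $D_\alpha$ or $D_{\alpha^{-1}}$ is non-trivial (using Theorem~C to know these are closed in $A$), concluding case (a) or (b) directly from the classification of simple contraction groups in \cite[Theorem~A]{GaW}; it then splits the remaining torsion situation according to whether the nub $U_0$ is trivial (giving (c) or (d), again via \cite[Theorem~A]{GaW}) or all of $A$ (giving (e) via \cite[Proposition~6.3]{Wi3} on irreducible compact groups). You instead trichotomise at the outset on $\wb{U_\alpha}$ and $\wb{U_{\alpha^{-1}}}$, which cleanly separates the two contractive cases from the compact case $U_0=A$; this is a legitimate exhaustive case division and avoids the paper's small detour through \cite[Corollary~4.4]{Wi3} to kill a finite nub. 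The most substantial divergence is your treatment of the compact case: where the paper quotes Willis's classification of irreducible expansive compact groups, you give a self-contained argument via \ref{conlarge}\,(b), Pontryagin duality, and the module theory of the PID $\mathbb{F}_p[t,t^{-1}]$ --- more elementary and arguably more illuminating in the abelian setting, though it only works because $A$ is abelian. In cases (I) and (II) you could streamline the step you flag as the ``main obstacle'': once $\alpha$ (or $\alpha^{-1}$) is contractive on $A$, every proper closed $\alpha$-stable subgroup is a discrete contraction group and hence trivial, so $A$ is literally a \emph{simple} contraction group and \cite[Theorem~A]{GaW} applies verbatim (which is exactly what the paper does), making the reductions through restricted products and primary decomposition unnecessary. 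One cosmetic caveat: in the exponent-$p$ reduction one should work with $\wb{pA}$ rather than $pA$ when closedness is not automatic, though in the compact case $pA$ is a continuous image of a compact set and hence closed.
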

\begin{proof}
Let $D_\alpha$ be the divisible part and $T_\alpha$
be the torsion part of~$U_\alpha$,
and define $D_{\alpha^{-1}}$ and $T_{\alpha^{-1}}$
analogously.
If $D_\alpha\not=\{1\}$, then~$D_\alpha=D_\alpha^*$
is an $\alpha$-stable closed subgroup (see Theorem~C)
which is non-discrete (see \ref{recall2}\,(a))
and thus $A=D_\alpha$.
By \ref{recall2}\,(a) and the hypotheses, $D_\alpha$
is a divisible simple contraction group
and hence of the form described in~(a)
(see \cite[Theorem~A]{GaW}).
Likewise, $A$ is of the form described in~(b)
whenever $D_{\alpha^{-1}}\not=\{1\}$.

Throughout the rest of the proof,
assume that $D_\alpha=D_{\alpha^{-1}}=\{1\}$.
Then $A=U_\alpha U_{\alpha^{-1}}=T_\alpha T_{\alpha^{-1}}$.

Since the nub $U_0$ of~$\alpha$ is an $\alpha$-stable
closed subgroup of~$A$, it either is all
of~$A$ or discrete. Being also compact,
it is finite in the latter case,
and thus $\{1\}$ is an open $\alpha$-stable
(normal) subgroup of~$U_0$.
Now \cite[Corollary 4.4]{Wi3} (proper such do not exist)
shows that $U_0=\{1\}$.

Case $U_0=\{1\}$:
Then $T_\alpha=U_\alpha=U_\alpha U_0$
and $T_{\alpha^{-1}}=U_{\alpha^{-1}}=U_{\alpha^{-1}} U_0$
are closed $\alpha$-stable subgroups of~$A$
(using \ref{recall1}\,(a)).
If $T_\alpha\not=\{1\}$,
then $T_\alpha$ is non-discrete.
Hence $T_\alpha=A$ by the hypotheses,
and this is a simple contraction group which is a torsion
group and hence of the form described in~(c)
(see \cite[Theorem~A]{GaW}). Likewise, $A$ is of the
form described in (d)
if $T_{\alpha^{-1}}\not=\{1\}$.

Case $A=U_0$: Then~$A$ is compact and is irreducible in the sense of
\cite[Definition 6.1]{Wi3} as all its proper $\alpha$-stable closed
(normal) subgroups are finite, and moreover~$A$ is infinite (as
$U_\alpha$ or $U_{\alpha^{-1}}$ is non-trivial and hence
non-discrete, being a contraction group). Hence, by
\cite[Proposition 6.3]{Wi3}, $(A,\alpha)$ is isomorphic to the
right-shift of $F^{\Z}$ for a finite simple group~$F$. Since $A$ is
abelian, $F\isom C_p$ for some~$p$ and thus $A$ is of the form
described in~(e).
\end{proof}
\begin{rem}\label{remprodker}
Let $G$ be a totally disconnected, locally compact group
and $\alpha\colon G\to G$ be an expansive automorphism.
If~$G$ is abelian, then the map
\[
\pi\colon U_\alpha^* \times U_{\alpha^{-1}}^* \to G, \quad
(x,y)\mto xy
\]
is a continuous, open homomorphism with discrete
kernel.
For non-abelian~$G$, the map still has open
image (see Lemma~\ref{basicexpa}\,(d)),
is a local homeomorphism,
and equivariant with respect to the natural
left and right actions of $U_\alpha$
and $U_{\alpha^{-1}}$, respectively.\\[3mm]
[To see this, let $V\sub G$ be a compact open subgroup
such that $V_+\cap V_-=\{1\}$
and $V=V_+V_-$ (see Lemma~\ref{basicexpa}\,(c)).
Then~$V_-$ and $V_+$ are open subgroups
of~$U_\alpha^*$ and $U_{\alpha^{-1}}^*$, respectively
(see proof of Lemma~\ref{expalcp}).
Then $\pi(V_+\times V_-)=V$
is open in~$G$ and $\pi|_{V_+\times V_-}$ is injective,
as $vw=v'w'$ for $v,v'\in V_+$, $w,w'\in V_-$
implies $v^{-1}v'=w(w')^{-1}\in V_+\cap V_-=\{1\}$
and thus $v=v'$ and $w=w'$. Since $V_+\times V_-$
is compact, $\pi$ restricts to
a homeomorphism $V_+\times V_-\to V$.
Since $\pi(gv,wh)=g\pi(v,w)h$ for $g\in U_\alpha$,
$h\in U_{\alpha^{-1}}$ and $(v,w)\in V_+\times V_-$,
also $\pi|_{g V_+\times V_-h}$ is a homeomorphism
onto an open set.]
\end{rem}
\begin{rem}
It can happen that $U_\alpha$ is closed
for an expansive automorphism~$\alpha$
of a totally disconnected, locally compact group~$G$,
but $U_{\bar{\alpha}}$ is not closed for the induced
automorphism~$\bar{\alpha}$ on $G/N$ for some
$\alpha$-stable closed normal subgroup $N\sub G$.
The following example also illustrates Remark~\ref{remprodker}.\\[3mm]
Given a non-trivial finite abelian group $(F,+)$,
consider the restricted products
$H_1:=F^{(-\N)}\times F^{\N_0}$
and $H_2:=F^{-\N}\times F^{(\N_0)}$,
with $V_1:=F^{\N_0}$ and $V_2:=F^{-\N}$,
respectively, as compact open subgroups.
Let~$\alpha$ be the right-shift on $G:=H_1\times H_2$
(i.e., on both~$H_1$ and~$H_2$).
Then~$\alpha$ is an automorphism and it is
expansive as $\bigcap_{n\in\Z}\alpha^n(V_1\times V_2)=\{0\}$.
Moreover, $U_\alpha= H_1$ and $U_{\alpha^{-1}}=H_2$ are
closed.
Also, let $\bar{\alpha}$ be the right-shift on~$F^{\Z}$.
Then
\[
q\colon G\to F^{\Z},\quad (f,g)\mto f+g
\]
is a continuous surjective homomorphism.
Restricted to the compact open subgroup $V_1\times V_2$,
the map~$q$ is an isomorphism of topological groups.
Hence~$q$ is open, has discrete kernel,
and is a quotient morphism.
Finally, $U_{\bar{\alpha}}=F^{(-\N)}\times F^{\N_0}$
is a dense proper subgroup in $F^\Z$.
Hence $U_{\bar{\alpha}}$ is not closed in~$F^\Z\cong G/\ker(q)$.
\end{rem}
Another property can be observed.
\begin{prop}
Let $G$ be a totally disconnected,
locally compact group that is abelian, and $\alpha\colon
G\to G$ be an expansive automorphism.
Then the torsion subgroup $\tor(G)$ is closed in~$G$.
\end{prop}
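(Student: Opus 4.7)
The plan is to exploit that $H:=U_\alpha U_{\alpha^{-1}}$ is an open (hence closed) subgroup of $G$ by Lemma~\ref{basicexpa}\,(d), and to reduce the claim to showing that $\tor(H)$ is closed in $H$, with finite exponent.

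To pin down $\tor(H)$, I would apply \ref{recall3}\,(c) to the locally compact contraction groups $U_\alpha^*$ and $U_{\alpha^{-1}}^*$ (which exist by Lemma~\ref{expalcp}) to obtain internal decompositions $U_{\alpha^{\pm 1}}=D_{\alpha^{\pm 1}}T_{\alpha^{\pm 1}}$, where $D_{\alpha^{\pm 1}}$ is torsion-free divisible and $T_{\alpha^{\pm 1}}$ is a torsion group of some finite exponent $e_\pm$. Given $h\in\tor(H)$ of order $n$, write $h=x_Dx_Ty_Dy_T$ with $x_D\in D_\alpha$, $x_T\in T_\alpha$, $y_D\in D_{\alpha^{-1}}$, $y_T\in T_{\alpha^{-1}}$, and set $m:=\mathrm{lcm}(n,e_+,e_-)$. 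Then $x_T^m=y_T^m=1$, and commutativity together with $h^m=1$ give $x_D^my_D^m=1$, so $x_D^m=y_D^{-m}\in D_\alpha\cap D_{\alpha^{-1}}=\{1\}$ by Corollary~\ref{trivsect}. Since $D_\alpha$ and $D_{\alpha^{-1}}$ are torsion-free, this forces $x_D=y_D=1$, and hence $h\in T_\alpha T_{\alpha^{-1}}$.

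It follows that $\tor(H)=T_\alpha T_{\alpha^{-1}}$; as both factors have finite exponent and $G$ is abelian, so does this subgroup, with $e:=\mathrm{lcm}(e_+,e_-)$ annihilating it. Thus $\tor(H)=\{h\in H:h^e=1\}$ is closed in $H$, and therefore closed in $G$ (since $H$ is closed in $G$). To conclude, let $g_n\to g$ in $G$ with $g_n\in\tor(G)$. Openness of $H$ yields $g_ng_1^{-1}\in H$ for large $n$, and since both $g_n$ and $g_1$ are torsion in the abelian group $G$, we have $g_ng_1^{-1}\in\tor(G)\cap H=\tor(H)$. Passing to the limit gives $gg_1^{-1}\in\tor(H)\sub\tor(G)$, so $g=(gg_1^{-1})g_1\in\tor(G)$.

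The main obstacle is the first step: showing that every torsion element of $H$ actually lies in $T_\alpha T_{\alpha^{-1}}$. This relies crucially on Corollary~\ref{trivsect} (so that $D_\alpha$ and $D_{\alpha^{-1}}$ meet trivially) together with the divisible/torsion splitting of locally compact contraction groups from~\cite{GaW} via \ref{recall3}\,(c); once this is in hand, the finite exponent of $\tor(H)$ and a routine coset argument finish the proof.
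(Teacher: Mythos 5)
Your proof is correct and follows essentially the same route as the paper: both reduce to the open subgroup $H=U_\alpha U_{\alpha^{-1}}$, use \ref{recall3}\,(c) together with Corollary~\ref{trivsect} to identify $\tor(H)=T_\alpha T_{\alpha^{-1}}$ as a group of finite exponent (the paper packages this as the internal decomposition $H=D_\alpha\times D_{\alpha^{-1}}\times T_\alpha T_{\alpha^{-1}}$, you argue element-wise), and then deduce closedness from the finite exponent (the paper via \ref{torsionclosure}, you via the closed set $\{h\in H\colon h^e=1\}$). One small slip in your final step: $g_ng_1^{-1}$ converges to $gg_1^{-1}$, not to $1$, so openness of $H$ does not by itself put $g_ng_1^{-1}$ into $H$ for large $n$; instead use that $g_ng_m^{-1}\to 1$ as $n,m\to\infty$, fix a large $m$ with $g_ng_m^{-1}\in H$ for all large $n$, and run the same argument with $g_m$ in place of $g_1$ (or simply note that for an open subgroup $H$ one has $\wb{\tor(G)}\cap gH\sub g\,\wb{\tor(G)\cap H}$ for any $g\in\tor(G)$).
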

\begin{proof}
Since $V:=U_\alpha U_{\alpha^{-1}}$ is an open subgroup of~$G$, we
need only show that $V\cap \tor(G)=\tor(V)$ is closed. After
replacing $G$ with its $\alpha$-stable subgroup~$V$, we may
therefore assume that $G=U_\alpha U_{\alpha^{-1}}$. Since $D_\alpha$
and $D_{\alpha^{-1}}$ are torsion-free (see \ref{recall3}\,(c)) and
$D_\alpha\cap D_{\alpha^{-1}}=\{1\}$ by Corollary~\ref{trivsect}, we
deduce that $D_\alpha D_{\alpha^{-1}}$ is isomorphic to
$D_\alpha\times D_{\alpha^{-1}}$ as an abstract group and hence
torsion-free. Hence $D_\alpha D_{\alpha^{-1}}\cap T_\alpha
T_{\alpha^{-1}}=\{1\}$. Combining this with $G=U_\alpha
U_{\alpha^{-1}} =D_\alpha D_{\alpha^{-1}} T_\alpha T_{\alpha^{-1}}$,
we see that
\begin{equation}\label{givestors}
G=(D_\alpha D_{\alpha^{-1}})\times (T_\alpha T_{\alpha^{-1}})
=D_\alpha\times D_{\alpha^{-1}}\times T_\alpha T_{\alpha^{-1}}
\end{equation}
internally as an abstract group.
By~(\ref{givestors}),
the torsion subgroup of~$G$ is
$\tor(G)=T_\alpha T_{\alpha^{-1}}$.
Hence $\tor(G)$ has finite exponent (like $T_\alpha$ and $T_{\alpha^{-1}}$).
Thus also $\wb{\tor(G)}$ is a torsion group (by \ref{torsionclosure})
and thus $\tor(G)=\wb{\tor(G)}$.
\end{proof}
\section{Example: {\boldmath$p$}-adic Lie groups}
Let $\K$ be a local field and $|.|$ be an absolute value
on~$\K$ defining its topology (see \cite{Wei}).
We pick an algebraic closure $\wb{\K}$ containing~$\K$
and use the same symbol, $|.|$, for the unique extension of the absolute
value on~$\K$ to an absolute value on~$\wb{\K}$
(see \cite[Theorem~16.1]{Shi}).
If~$E$ is a finite-dimensional $\K$-vector space
and~$\beta\colon E\to E$ a $\K$-linear
automorphism, we write~$R(\beta)$
for the set of all absolute values $|\lambda|$
of zeros~$\lambda$ of the characteristic polynomial
of~$\beta$ in~$\wb{\K}$.
We let $\wt{E}_\lambda\sub E\otimes_\K \wb{\K}$ be the generalized eigenspace
of $\beta\otimes_\K \id_{\wb{\K}}$ for the eigenvalue~$\lambda$.
For $\rho\in R(\beta)$, we let
\[
E_\rho:=
\left(\bigoplus_{|\lambda|=\rho} \wt{E}_\lambda\right)\cap E.
\]
Then $E=\bigoplus_{\rho\in R(\beta)}E_\rho$ (see
\cite[Chapter~II, \S1]{Mar})
and we recall that
\[
E=U_\beta\oplus M_\beta\oplus U_{\beta^{-1}}
\]
with
\begin{equation}\label{formsubsp}
M_\beta=E_1, \quad
U_\beta=\bigoplus_{\rho<1}E_\rho\quad
\mbox{and}\quad  U_{\beta^{-1}}=\bigoplus_{\rho>1}E_\rho
\end{equation}
(cf.\ \cite[Lemma~2.5]{GFi}).\\[2.5mm]
If $G$ is a Lie group over~$\K$,
then its tangent space $L(G):=T_1(G)$ at the identity
element
carries a natural Lie algebra structure,
and $L(\alpha)\colon L(G)\to L(H)$ is a Lie algebra
homomorphism for each $\K$-analytic homomorphism
$\alpha\colon G\to H$ between $\K$-analytic
Lie groups. We abbreviate $\Ad(g):=L(I_g)$,
where $I_g\colon G\to G$, $x\mto gxg^{-1}$ for $g\in G$
(cf.\ \cite{Ser} for further information).
\begin{prop}\label{algnilp}
Let $\alpha$ be an analytic automorphism
of a Lie group~$G$ over a local field.
Then the following conditions are equivalent:
\begin{itemize}
\item[\rm(a)]
$\alpha$ is expansive;
\item[\rm(b)]
$\beta:=L(\alpha)\colon L(G)\to L(G)$ is expansive;
\item[\rm(c)]
$1\not\in R(\beta)$.
\end{itemize}
If $\alpha$ is expansive,
then $L(G)$ is a nilpotent Lie algebra.
\end{prop}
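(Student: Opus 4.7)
The plan is to establish (b) $\Leftrightarrow$ (c), then (a) $\Leftrightarrow$ (b), and finally deduce nilpotency of $L(G)$ from (c). For the first equivalence, which is pure linear algebra on $L(G)$, I would apply Proposition~\ref{givesexp}(a) to the linear automorphism $\beta$ on the additive locally compact group $L(G)$: expansiveness of $\beta$ reduces to expansiveness of $\beta|_{M_\beta}$, and by (\ref{formsubsp}) one has $M_\beta = E_1$. On $E_1$ every eigenvalue of $\beta$ has absolute value $1$, so each nonzero vector has a bounded $\beta^{\Z}$-orbit staying away from~$0$; thus $\beta|_{E_1}$ is expansive precisely when $E_1 = \{0\}$, i.e.\ when $1 \notin R(\beta)$.

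For (a) $\Leftrightarrow$ (b) I would pass through the exponential map, choosing a balanced open $0$-neighbourhood $W \sub L(G)$ such that $\exp \colon W \to V := \exp(W)$ is a $\K$-analytic diffeomorphism and, after a further shrinking of $W$, the intertwining identity $\alpha \circ \exp = \exp \circ \beta$ holds on $W$. If $y \in \bigcap_{n \in \Z} \beta^n(W)$, then each iterate $\beta^n(y)$ lies in $W$, so $\alpha^n(\exp y) = \exp(\beta^n y) \in V$ and $\exp y \in \bigcap_n \alpha^n(V)$; conversely, for $g = \exp(y) \in \bigcap_n \alpha^n(V)$, writing the iterates $\alpha^n(g) = \exp(y_n)$ with $y_n \in W$ and invoking injectivity of $\exp$ on $W$ forces $y_n = \beta^n(y)$, so $y \in \bigcap_n \beta^n(W)$. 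Triviality of either intersection is therefore equivalent to triviality of the other, giving the equivalence of (a) and (b).

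For the nilpotency assertion I would exploit the generalized eigenspace decomposition over an algebraic closure, $L(G) \otimes_{\K} \wb{\K} = \bigoplus_{\lambda \in \Lambda} \wt E_\lambda$, where $\Lambda$ is the finite set of eigenvalues of $\beta$, together with the bracket compatibility $[\wt E_\lambda, \wt E_\mu] \sub \wt E_{\lambda \mu}$. Since $|\lambda| \neq 1$ forces every $\lambda \in \Lambda$ to have infinite order in $\wb{\K}^\times$ (roots of unity have absolute value~$1$), one can choose a group homomorphism $\phi \colon \Gamma \to \Z$ on the finitely generated abelian subgroup $\Gamma := \langle \Lambda \rangle \sub \wb{\K}^\times$ with $\phi(\lambda) \neq 0$ for every $\lambda \in \Lambda$; this yields a finite $\Z$-grading $L(G) \otimes \wb{\K} = \bigoplus_n L_n$ with $L_n := \bigoplus_{\phi(\lambda)=n} \wt E_\lambda$ and trivial degree-zero component. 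The main obstacle is to extract nilpotency from this grading: the multiplication-by-degree map is a semisimple derivation with no zero eigenvalue, so Jacobson's theorem (a finite-dimensional Lie algebra admitting an invertible derivation is nilpotent in characteristic~$0$) applies directly, and in positive characteristic one argues using the finite shift structure of the grading to show that iterated brackets of homogeneous elements eventually exhaust the admissible weights. Nilpotency then descends from $L(G) \otimes \wb{\K}$ to $L(G)$ by base change.
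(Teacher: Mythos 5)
Your treatment of (b)$\Leftrightarrow$(c) is fine and is essentially the paper's argument ($M_\beta=E_1$ is a vector subspace, so it is discrete exactly when it is trivial). The first genuine gap is in (a)$\Leftrightarrow$(b): your whole argument runs through the exponential map, but the proposition is stated over an arbitrary local field, and over local fields of positive characteristic such as $\mathbb{F}_q((t))$ there is no exponential map (the relevant series have denominators $n!$). The paper avoids $\exp$ entirely: for (a)$\Rightarrow$(b) it argues by contraposition, producing inside any prescribed identity neighbourhood an $\alpha$-stable centre manifold $W$ with $T_1(W)=M_\beta\not=\{0\}$ (quoting the preprint version of \cite{GIM}), so that $\{1\}\not=W\sub\bigcap_{n\in\Z}\alpha^n(V)$; and for (c)$\Rightarrow$(a) it uses that $U_\alpha$ and $U_{\alpha^{-1}}$ are immersed Lie subgroups with Lie algebras $U_\beta$ and $U_{\beta^{-1}}$, so that when $L(G)=U_\beta\oplus U_{\beta^{-1}}$ the product map $U_\alpha^*\times U_{\alpha^{-1}}^*\to G$ is a local diffeomorphism at $(1,1)$, and then checks expansiveness on a product $PQ$ of well-chosen compact open subgroups. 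Even in characteristic zero your converse direction needs an extra precaution: to conclude $y_{n+1}=\beta(y_n)$ from $\exp(y_{n+1})=\exp(\beta(y_n))$ you must shrink $W$ so that $\beta(W)$ lies in a domain on which $\exp$ is defined and injective.

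The second gap is the nilpotency claim in positive characteristic. Your grading $L(G)\otimes\wb{\K}=\bigoplus_n L_n$ with $L_0=0$ does exist and yields an invertible derivation, so Jacobson's theorem settles characteristic zero; but the fallback assertion that ``iterated brackets of homogeneous elements eventually exhaust the admissible weights'' is false as stated. If the support of the grading is, say, $\{-1,1,2\}$, then the weight walk $1,2,1,2,\ldots$ with increments $+1,-1,+1,\ldots$ never leaves the support, so weight bookkeeping alone does not force the lower central series to terminate. The Bourbaki exercise cited in the paper (Exercise~21\,(b) for Part~I of \cite{Bou}, \S4) works in all characteristics by a different mechanism: for homogeneous $x\in\wt{E}_\lambda$ one has $\ad(x)^m(\wt{E}_\mu)\sub\wt{E}_{\lambda^m\mu}$, and since $\lambda$ is not a root of unity the elements $\lambda^m\mu$ are pairwise distinct, so only finitely many of them are eigenvalues and $\ad(x)$ is nilpotent; an Engel-type argument then finishes. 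You should either restrict to characteristic-zero local fields or replace both the exponential-map step and the grading combinatorics by arguments of this kind.
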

\begin{proof}
(a)$\impl$(b): By contraposition.
If (b) is false, then~$\beta$ is not expansive.
To deduce that~$\alpha$ is not expansive,
let $V\sub G$ be an identity neighbourhood.
Since~$U_\beta$ is a vector subspace of~$L(G)$ by~(\ref{formsubsp})
and hence closed,
using Proposition~\ref{givesexp}\,(b)
we deduce that~$M_\beta$ is not discrete
and hence a non-trivial vector subspace
(in view of (\ref{formsubsp})).
But then~$G$ contains a so-called centre manifold~$W$
around the fixed point~$1$ of~$\alpha$,
which can be chosen as a
submanifold of~$G$ contained in~$V$
that is stable under~$\alpha$
and satisfies $T_1(W)=M_\alpha$ (whence $W\not=\{1\}$);
see Proposition 6.3\,(a) and part (b) of the Local Invariant Manifold Theorem in \cite{GFi};
cf.\ also \cite{GIM}.
Then $\{1\}\not=W\sub\bigcap_{n\in\Z}\alpha^n(V)$,
and thus $\alpha$ is not expansive.\\[2.5mm]
(b)$\aeq$(c): Since $U_\beta$ is closed,
$\beta$~is expansive if and only if $M_\beta=L(G)_1$ is discrete.
Since $L(G)_1$ is a vector space,
the latter holds if and only if $L(G)_1=\{0\}$,
i.e., $1\not\in R(\beta)$.\\[2.5mm]
(c)$\impl$(a):
$U_\alpha$ and $U_{\alpha^{-1}}$
are immersed Lie subgroups of~$G$ with Lie algebras~$U_\beta$
and $U_{\beta^{-1}}$, respectively
(see \cite[Theorem D]{GFi}).
We write $U_\alpha^*$
for $U_\alpha$ as a Lie group;
because the underlying topology is locally
compact and $\alpha$ restricts to a contractive
Lie group automorphism,
this is consistent with the definition
of $U_\alpha^*$ in Section~\ref{secrefine}.
Likewise, we consider $U_{\alpha^{-1}}^*$ as
a Lie group.
If $1\not\in R(\beta)$,
then $L(G)=U_\beta\oplus U_{\beta^{-1}}$,
entailing that the product map
$\pi\colon U_\alpha^*\times U_{\alpha^{-1}}^*\to G$, $(x,y)\mto xy$
has invertible differential at~$(1,1)$ (the addition map
$U_\beta\times U_{\beta^{-1}}\to L(G)=U_\beta\oplus U_{\beta^{-1}}$).
Thus, by the inverse function theorem~\cite{Ser},
there exist open identity neighbourhoods $V\sub U_\alpha^*$
and $W\sub U_{\alpha^{-1}}^*$ such that $VW$ is open in~$G$ and
the restriction
\begin{equation}\label{productmap}
\pi|_{V\times W}\colon V\times W\to VW
\end{equation}
is an analytic diffeomorphism.
Using \cite[Lemma~3.2\,(i)]{Sie},
we find compact open subgroups $P\sub V$ of $U_\alpha^*$
and $Q\sub W$ of $U_{\alpha^{-1}}^*$
such that $\alpha(P)\sub P$, $\alpha^{-1}(P)\sub V$,
$\alpha^{-1}(Q)\sub Q$ and $\alpha(Q)\sub W$.
Then $PQ$ is an open identity neighbourhood
in~$G$
and we now show that $\bigcap_{n\in\Z}\alpha^n(PQ)=\{1\}$.
To this end, let $x\in P$ and $y\in Q$.
If $x\not=1$, then $\alpha^{-n}(x)\not\in P$
for some $n\in \N$, which we choose minimal.
Thus $\alpha^{-n+1}(x)\in P$
and hence $\alpha^{-n}(x)\in V$.
Since $\alpha^{-n}(x)\in V\setminus P$ by the preceding
and $\alpha^{-n}(y)\in Q$, we see that
$\alpha^{-n}(xy)=\alpha^{-n}(x)\alpha^{-n}(y)\in (V\setminus P)Q$.
As the map (\ref{productmap}) is a bijection,
we deduce that $\alpha^{-n}(xy) \not \in PQ$.
Likewise, $\alpha^m(xy)\not \in PQ$
for some $m\in\N$ if $y\not=1$.
Thus $\bigcap_{n\in\Z}\alpha^n(PQ)=\{1\}$ indeed and thus~$\alpha$
is expansive.\\[2.5mm]
\emph{Final assertion.}
If (c) holds, then $\beta$ is a Lie algebra automorphism of~$L(G)$
and $|\lambda|\not=1$
for all eigenvalues~$\lambda$
of $\beta\otimes_\K\id_{\wb{\K}}$ in~$\wb{\K}$,
entailing that none of the $\lambda$ is a root of unity.
Hence $L(G)$ is nilpotent
(see Exercise~21\,(b) among the exercises
for Part~I of \cite{Bou}, \S4, or
\cite[Theorem 2]{Jac}).
\end{proof}
\begin{numba}\label{ratner}
For each continuous homomorphism
$\theta\colon \Q_p\to \GL_n(\Q_p)$,
there exists a nilpotent $n\!\times \!n$-matrix
$x\in \Q_p^{n\times n}$
such that $\theta(t)=\exp(tx)$
for all $t\in \Q_p$, using the
matrix exponential function
\cite[Theorem~1.1]{Rat}.
Thus $\theta'(0)=x$ uniquely determines~$\theta$,
and so does $\theta|_W$ for any $0$-neighbourhood $W\sub \Q_p$.
\end{numba}
\begin{la}\label{onepars}
Let $\alpha$ be a contractive automorphism
of a $p$-adic Lie group~$G$.
Then the following holds:
\begin{itemize}
\item[\rm(a)]
For each $g\in G$, there is a
unique continuous homomorphism $\theta_g\colon \Q_p\to G$
such that $\theta_g(1)=g$.
Moreover, $\{\theta_g'(0)\colon g\in G\}=L(G)$.
\item[\rm(b)]
If $\ch\sub L(G)$
is an $L(\alpha)$-stable Lie subalgebra,
then there exists an
$\alpha$-stable
Lie subgroup~$H$ of~$G$ with $L(H)=\ch$.
\end{itemize}
\end{la}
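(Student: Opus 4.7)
The strategy is to build an $\alpha$-equivariant isomorphism $\Phi\colon \mathfrak{G}\to G$, where $\mathfrak{G}$ denotes $L(G)$ equipped with the Baker--Campbell--Hausdorff (BCH) product $*$, and to read off both (a) and (b) from this identification. Since a contractive automorphism is expansive (Remark \ref{contrexpa}), Proposition \ref{algnilp} gives that $L(G)$ is nilpotent, so BCH is a polynomial and turns $L(G)$ into a $p$-adic Lie group $\mathfrak{G}$ with $L(\mathfrak{G})=L(G)$. Contractivity of $\alpha$, combined with the local identity $\alpha\circ\exp_G=\exp_G\circ L(\alpha)$, forces $L(\alpha)$ to be contractive on $L(G)$; and $\exp_G$ is an analytic local isomorphism $\mathfrak{G}\to G$ near $0$ with derivative the identity.

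I would extend this local isomorphism to a global analytic homomorphism by the $\alpha$-equivariant formula
\[
\Phi(y):=\alpha^{-n}\bigl(\exp_G(L(\alpha)^n y)\bigr),
\]
valid for any $n\in\N_0$ with $L(\alpha)^n y$ in the domain of $\exp_G$ (such $n$ exists for every $y\in L(G)$ by contractivity of $L(\alpha)$). Independence of $n$ and the homomorphism property follow from the local identities $\alpha\circ\exp_G=\exp_G\circ L(\alpha)$ and $\exp_G(y*z)=\exp_G(y)\exp_G(z)$, applied after pushing all arguments into the exp domain by a high power of $L(\alpha)$. Surjectivity of $\Phi$ is immediate: every $g\in G$ has $\alpha^n(g)$ in the image of $\exp_G$ for large $n$, so $g\in \Phi(L(G))$. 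Injectivity holds because $\ker\Phi$ is a closed, discrete (since $L(\Phi)=\id$), $L(\alpha)$-stable subgroup of $\mathfrak{G}$, and contractivity of $L(\alpha)$ forces any such subgroup to be trivial. Hence $\Phi$ is an isomorphism of $p$-adic Lie groups intertwining $L(\alpha)$ with $\alpha$.

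From $\Phi$, both assertions follow readily. For (a): any continuous homomorphism $\Q_p\to\mathfrak{G}$ has abelian image, and on an abelian subset the BCH product reduces to vector addition (all higher-bracket terms contain $[y,z]$, hence vanish when $[y,z]=0$). Thus continuous homomorphisms $\Q_p\to\mathfrak{G}$ are precisely the maps $t\mapsto ty$ for $y\in L(G)$. Setting $y:=\Phi^{-1}(g)$, the unique $\theta_g$ is $t\mapsto\Phi(ty)$, and $\theta_g'(0)=y$ ranges over all of $L(G)$ as $g$ ranges over $G$. For (b): a Lie subalgebra $\ch\sub L(G)$ is automatically closed in $L(G)$ and closed under BCH, so it is a closed Lie subgroup of $\mathfrak{G}$; its image $H:=\Phi(\ch)$ is then an $\alpha$-stable closed Lie subgroup of $G$ with $L(H)=\ch$, the $\alpha$-stability coming from $\alpha\circ\Phi=\Phi\circ L(\alpha)$ together with $L(\alpha)(\ch)\sub\ch$.

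The main obstacle is the construction and bijectivity of $\Phi$: the bootstrap definition must be shown independent of $n$, respect the group law, and be globally analytic. Contractivity of $L(\alpha)$ is essential both to make the defining formula available for every $y$ and to force the discrete kernel of $\Phi$ to vanish. Once $\Phi$ is in hand, the remainder of the proof is routine.
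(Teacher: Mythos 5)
Your proof is correct and rests on the same central idea as the paper's: transport everything to $L(G)$ endowed with the Baker--Campbell--Hausdorff product $*$, which is available because $L(G)$ is nilpotent (Remark~\ref{contrexpa} plus Proposition~\ref{algnilp}). The difference lies in how the identification $(G,\alpha)\cong((L(G),*),L(\alpha))$ is obtained. The paper observes that the two are locally isomorphic contraction groups and invokes \cite[Proposition~2.2]{Wan} to conclude that they are globally isomorphic; you instead build the isomorphism explicitly via the bootstrap formula $\Phi(y)=\alpha^{-n}\bigl(\exp_G(L(\alpha)^n y)\bigr)$ and check well-definedness, the homomorphism property, surjectivity and injectivity by hand. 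Your construction amounts to a proof of Wang's result in this special case, so your argument is more self-contained but longer, while the paper's citation buys brevity. The treatments of (a) also differ mildly: the paper derives existence and uniqueness of $\theta_g$ from unique divisibility of the nilpotent group $(L(G),*)$ (where $g^n=ng$), whereas you argue that a continuous homomorphism $\Q_p\to(L(G),*)$ has abelian image, on which $*$ reduces to $+$, and that a continuous additive map into $L(G)$ is $\Q_p$-linear; both routes are sound (for yours, one should note explicitly that commuting under $*$ forces the Lie bracket to vanish, which holds in characteristic zero). Part (b) is handled identically in the two proofs: a Lie subalgebra $\ch$ is a $*$-subgroup of $(L(G),*)$, stable under $L(\alpha)$, and is transported back to an $\alpha$-stable Lie subgroup of~$G$.
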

\begin{proof}
(a) Let $*\colon L(G)\times L(G)\to L(G)$
be the Campbell-Hausdorff multiplication on the
nilpotent Lie algebra~$L(G)$.
Because $(G,\alpha)$ and $((L(G),*),L(\alpha))$
are locally isomorphic contraction groups,
they are isomorphic (see \cite[Proposition~2.2]{Wan}).
The nilpotent group $(L(G),*)$ inherits unique
divisibility from $(L(G),+)$,
since $ng$ (in the vector space $L(G)$)
coincides with $g^n$ (in $(L(G),*)$).
It is clear from this that $\theta_g(t)=tg$
is the unique continuous homomorphism $\Q_p\to (L(G),*)$
with $\theta_g(1)=g$. It satisfies $g=\theta_g'(0)$.\\[2.5mm]
(b) We may work with the isomorphic group
$(L(G),*)$ instead of~$G$.
Now $H:=\ch$ is an $L(\alpha)$-stable
Lie subgroup of $(L(G),*)$ with Lie algebra~$\ch$.
\end{proof}
\begin{la}\label{nwla}
Let $G$ be a linear $p$-adic Lie group.
Assume that $G$ is generated by $\bigcup_{\theta\in \Theta}\theta(\Q_p)$
for a set~$\Theta$ of continuous homomorphisms $\theta\colon \Q_p\to G$,
and $L(G)$ is generated by $\{\theta'(0)\colon \theta\in \Theta\}$
as a Lie algebra. Then the centre of~$G$ coincides with the kernel
of $\Ad\colon G\to \Aut(L(G))$.
\end{la}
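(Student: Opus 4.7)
The plan is to prove the two inclusions separately. The inclusion $Z(G)\subseteq\ker(\Ad)$ is purely functorial: if $g\in Z(G)$, then $I_g=\id_G$, so $\Ad(g)=L(I_g)=\id_{L(G)}$ and $g\in\ker(\Ad)$. Neither the linearity of~$G$ nor any generation hypothesis is needed here.

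For the reverse inclusion, fix $g\in\ker(\Ad)$. Since $G$ is generated by $\bigcup_{\theta\in\Theta}\theta(\Q_p)$, it suffices to show that $g$ commutes with $\theta(t)$ for every $\theta\in\Theta$ and every $t\in\Q_p$. Fix $\theta\in\Theta$ and consider the continuous homomorphism
\[
\eta\;:=\;I_g\circ\theta\colon \Q_p\to G.
\]
By the chain rule and the assumption $g\in\ker(\Ad)$, its derivative at $0$ satisfies
\[
\eta'(0)\;=\;L(I_g)(\theta'(0))\;=\;\Ad(g)(\theta'(0))\;=\;\theta'(0).
\]
Hence $\eta$ and $\theta$ are continuous homomorphisms $\Q_p\to G$ sharing the same derivative at~$0$, and the task reduces to showing that any such pair must coincide.

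To pin down $\eta=\theta$, I would invoke linearity: pick an injective continuous homomorphism $\iota\colon G\emb\GL_n(\Q_p)$ (which is automatically analytic, as continuous homomorphisms between $p$-adic Lie groups are). Then $\iota\circ\eta$ and $\iota\circ\theta$ are continuous homomorphisms $\Q_p\to\GL_n(\Q_p)$, both having derivative $L(\iota)(\theta'(0))$ at~$0$. By \ref{ratner}, a continuous one-parameter subgroup of $\GL_n(\Q_p)$ is uniquely determined by its derivative at~$0$; hence $\iota\circ\eta=\iota\circ\theta$, and the injectivity of $\iota$ forces $\eta=\theta$. Thus $g\theta(t)g^{-1}=\theta(t)$ for every $t\in\Q_p$, and varying $\theta$ over $\Theta$ shows that $g$ centralizes a generating set of $G$, so $g\in Z(G)$.

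The main point is the rigidity statement \ref{ratner}, and the linearity of $G$ is precisely what allows this rigidity to be transported from $\GL_n(\Q_p)$ into $G$. The second hypothesis (that $\{\theta'(0)\colon\theta\in\Theta\}$ generates $L(G)$ as a Lie algebra) does not explicitly enter this argument; it is presumably included because the lemma will be applied in contexts where both generation properties are available.
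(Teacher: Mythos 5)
Your proposal is correct, and its core is the same as the paper's: both proofs reduce everything to the rigidity statement~\ref{ratner}, i.e.\ that a continuous one-parameter subgroup of $\GL_n(\Q_p)$ is determined by its derivative at~$0$, transported into~$G$ via a continuous injection $G\to\GL_n(\Q_p)$, so that $I_g\circ\theta=\theta$ is equivalent to $\Ad(g)\theta'(0)=\theta'(0)$. The one genuine (if small) difference is how the inclusion $Z(G)\sub\ker(\Ad)$ is obtained: you get it for free from functoriality of $L$ (if $I_g=\id_G$ then $\Ad(g)=L(I_g)=\id_{L(G)}$), whereas the paper runs a chain of equivalences $g\in Z(G)\aeq\Ad(g)\theta'(0)=\theta'(0)$ for all $\theta\aeq\Ad(g)=\id_{L(G)}$, and it is the last step that consumes the hypothesis that $\{\theta'(0)\colon\theta\in\Theta\}$ generates $L(G)$ as a Lie algebra (the fixed-point set of $\Ad(g)$ being a subalgebra). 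So your observation that this second hypothesis is dispensable for the statement as written is correct; what it buys the paper is merely an alternative route to the easy inclusion. One point both arguments lean on, which you rightly flag, is that the continuous homomorphisms $\theta$, $I_g\circ\theta$ and the inclusion $G\to\GL_n(\Q_p)$ are automatically analytic, so that derivatives at~$0$ and the chain rule $(I_g\circ\theta)'(0)=\Ad(g)\theta'(0)$ make sense; this is standard for $p$-adic Lie groups and is implicitly assumed in the paper as well (the statement of Lemma~\ref{nwla} already refers to $\theta'(0)$).
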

\begin{proof}
Let $g\in G$.
For each $\theta\in \Theta$,
the map $I_g\circ \theta \colon \Q_p\to G$, $t\mto g\theta(t)g^{-1}$
is a continuous homomorphism such that
$(I_g\circ \theta)'(0)=\Ad(g)\theta'(0)$.
Thus, by \ref{ratner},
$I_g\circ \theta=\theta$ if and only if $\Ad(g)\theta'(0)=\theta'(0)$.
Since $\bigcup_{\theta\in \Theta}\theta(\Q_p)$ generates~$G$,
we see that $g\in Z(G)$ if and only if $\Ad(g)\theta'(0)=\theta'(0)$
for all $\theta\in\Theta$. The latter is equivalent
to $\Ad(g)(x)=x$ for all $x\in L(G)$,
because $\{x\in L(G)\colon \Ad(g)(x)=x\}$
is a Lie subalgebra of~$L(G)$
and $L(G)$ is generated by~$\theta'(0)$ for $\theta\in \Theta$
by hypothesis.
\end{proof}
{\bf Proof of Theorem~D.}
After replacing $G$ with an open subgroup, we may assume
that~$G$ is generated by $U_\alpha\cup U_{\alpha^{-1}}$ (see
Lemma~\ref{basicexpa}\,(d)).
We prove that~$G$ is nilpotent in this case,
by induction on the dimension
$\dim(G)$ of~$G$ as a $p$-adic manifold.
If $\dim(G)=0$, then~$G$ is discrete, whence $U_\alpha=U_{\alpha^{-1}}=\{1\}$
and $G=\langle U_\alpha\cup U_{\alpha^{-1}}\rangle=\{1\}$
is nilpotent.\\[2.5mm]
Now assume that $\dim(G)>0$. After replacing~$G$ with an isomorphic
group, we may assume that $G$ is a subgroup of~$\GL_n(\Q_p)$ for
some $n\in\N$, and that the inclusion map $G\to \GL_n(\Q_p)$ is
continuous (but not necessarily a homeo\-morphism onto its image).
Then $L(G)$ is a non-zero nilpotent Lie
algebra (see (a)$\impl$(d) in Proposition~\ref{algnilp})
and so it has
centre $Z(L(G))\not=\{0\}$. The centre is $L(\alpha)$-stable, and
the restriction~$\beta$ of $L(\alpha)$ to the centre is expansive
(like $L(\alpha)$). Hence $Z(L(G))=U_\beta\oplus U_{\beta^{-1}}$.
After replacing~$\alpha$ with~$\alpha^{-1}$ if necessary, we may
assume that $U_{\beta^{-1}}\not=\{0\}$. According to
Lemma~\ref{onepars}\,(b), there is an $\alpha$-stable Lie subgroup
$H\sub U_{\alpha^{-1}}$ with $L(H)=U_\beta$. We claim that~$H$ is in
the centre~$Z(G)$ of~$G$. If this is true, then
$Z(G)$ has positive dimension.
Thus $G/Z(G)$ is a Lie group of dimension $\dim(G/Z(G))<\dim(G)$,
and it is a linear Lie group as it injects into $\Aut(L(G))$,
by Lemma~\ref{nwla}.\footnote{Let $\Theta$ be the set
of continuous homomorphisms from $\Q_p$ to $U_\alpha$
or $U_{\alpha^{-1}}$.
By $G=\langle U_\alpha\cup U_{\alpha^{-1}}\rangle$
and Lemma~\ref{onepars}\,(a),
the first hypothesis of Lemma~\ref{nwla} is satisfied.
Since $L(G)=L(U_\alpha)+L(U_{\alpha^{-1}})$
and $L(U_\alpha)\cup L(U_{\alpha^{-1}})
=\{\theta'(0)\colon \theta\in \Theta\}$
by Lemma~\ref{onepars}\,(a) and ``(a)$\impl$(c)'' in Lemma~\ref{algnilp},
also the second hypothesis of Lemma~\ref{nwla} is satisfied.}
By induction,
$G/Z(G)$ is nilpotent and hence so is~$G$.\\[2.5mm]
To prove the claim, let $h\in H$
and $\theta \colon \Q_p\to H\sub
U_{\alpha^{-1}}$ be a
continuous homomorphism with $\theta(1)=h$
(see Lemma~\ref{onepars}\,(a)).
Then $x:=\theta'(0)\in L(H)\sub Z(L(G))$,
entailing that $\ad(x):=[x,\sbull]=0$.
Now $\theta(t)=\exp(tx)$ for all
$t\in \Q_p$, by~\ref{ratner}.
For $|t|$ small, $\Ad(\theta(t))=\Ad(\exp(tx))=e^{t\ad(x)}=\id_{L(G)}$,
using Corollary~3
in \cite[Chapter III, \S4, no.\,4]{Bou}).
Thus $\Ad\circ \theta=\id_{L(G)}$,
by the uniqueness assertion of \ref{ratner},
applied to $\Ad\circ\theta\colon \Q_p\to \Aut(L(G))$.
In particular, $\Ad(h)=\Ad(\theta(1))=\id_{L(G)}$
and thus $h\in Z(G)$, by Lemma~\ref{nwla}.\Punkt
\begin{numba}\label{nusitu}
If $G$ is a totally disconnected, locally compact group which is a
nilpotent group, let $\{1\}=Z_0\lhd Z_1\lhd\cdots \lhd Z_n=G$ be its
ascending central series defined recursively via
$Z_k:=q_k^{-1}(Z(G/Z_{k-1}))$, where $q_k\colon G\to G/Z_{k-1}$ is
the canonical quotient morphism. Let~$\alpha$ be an expansive
automorphism of~$G$ and~$\alpha_k$ the induced automorphism of
$G_k/G_{k-1}$.
\end{numba}
\begin{prop}\label{issubgp}
If $Z_k/Z_{k-1}=U_{\alpha_k}U_{\alpha_k^{-1}}$
for all $k\in \{1,\ldots, n\}$ in the situation of \emph{\ref{nusitu}},
then $G=U_\alpha U_{\alpha^{-1}}$.
In particular, $U_\alpha U_{\alpha^{-1}}$
is a subgroup of~$G$.
\end{prop}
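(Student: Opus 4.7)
The plan is to proceed by induction on the nilpotency class $n$ of $G$. The base case $n=1$ is trivial: $G=Z_1=Z_1/Z_0=U_{\alpha_1}U_{\alpha_1^{-1}}=U_\alpha U_{\alpha^{-1}}$ directly from the hypothesis.

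For the inductive step, I would pass to the quotient $G/Z_1$. The centre $Z_1=Z(G)$ is a closed, topologically characteristic (hence $\alpha$-stable) normal subgroup. A routine check shows that the ascending central series of $G/Z_1$ is $\{Z_{k+1}/Z_1\}_{k=0}^{n-1}$, and that the induced automorphism on the subfactor $(Z_{k+1}/Z_1)/(Z_k/Z_1)\isom Z_{k+1}/Z_k$ coincides with $\alpha_{k+1}$. Thus the hypothesis $Z_k/Z_{k-1}=U_{\alpha_k}U_{\alpha_k^{-1}}$ is inherited by $G/Z_1$, which has nilpotency class $n-1$. The inductive hypothesis then yields $G/Z_1=U_{\bar\alpha}U_{\bar\alpha^{-1}}$ for the induced automorphism $\bar\alpha$ on $G/Z_1$.

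Now take any $g\in G$. By \ref{recall1}\,(d) applied to the canonical quotient morphism $q_1\colon G\to G/Z_1$, we have $q_1(U_\alpha)=U_{\bar\alpha}$ and $q_1(U_{\alpha^{-1}})=U_{\bar\alpha^{-1}}$, so we can write $q_1(g)=q_1(a)q_1(b)$ with $a\in U_\alpha$ and $b\in U_{\alpha^{-1}}$. Hence $g=ab\,z$ for some $z\in Z_1$. Using the hypothesis $Z_1=U_{\alpha_1}U_{\alpha_1^{-1}}=(U_\alpha\cap Z_1)(U_{\alpha^{-1}}\cap Z_1)$, factor $z=cd$ with $c\in U_\alpha\cap Z_1$ and $d\in U_{\alpha^{-1}}\cap Z_1$. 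Since $c$ lies in $Z(G)$ it commutes with $b$, so
\[
g=abcd=acbd=(ac)(bd)\in U_\alpha U_{\alpha^{-1}},
\]
using that $U_\alpha$ and $U_{\alpha^{-1}}$ are subgroups. This gives $G\subseteq U_\alpha U_{\alpha^{-1}}$, and the reverse inclusion is automatic; the final sentence is then immediate.

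The only step requiring a little care is the bookkeeping for the ascending central series of $G/Z_1$ and the matching of induced automorphisms, which is what makes the hypothesis available for the quotient. The crucial algebraic point that makes the argument go through is that $Z_1$ is \emph{central} in $G$, so that the stray factor $c\in Z_1$ produced by the $k=1$ hypothesis can be slid past $b$ without leaving $U_\alpha$; this is precisely what would fail if one tried to run the induction on the chain $Z_{k-1}\lhd Z_k$ directly rather than quotienting by $Z_1$.
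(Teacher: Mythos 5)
Your proof is correct and follows essentially the same route as the paper's: induction on the length of the ascending central series, quotienting by $Z(G)=Z_1$, invoking $q(U_\alpha)=U_{\bar\alpha}$ from \ref{recall1}\,(d), and sliding the central factor from $U_\alpha\cap Z_1$ past $U_{\alpha^{-1}}$. The paper merely phrases the last step set-theoretically as $U_\alpha U_{\alpha^{-1}}(U_\alpha\cap Z(G))(U_{\alpha^{-1}}\cap Z(G))=U_\alpha(U_\alpha\cap Z(G))U_{\alpha^{-1}}(U_{\alpha^{-1}}\cap Z(G))$ where you argue element-wise.
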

\begin{proof}
If $n=0$, then $G=\{1\}=U_\alpha U_{\alpha^{-1}}$.
If $n\geq 1$, let $\beta$ be the expansive
automorphism of $G/Z(G)$ induced by~$\alpha$,
and $q\colon G\to G/Z(G)$ be the canonical quotient morphism.
Then $Z_1=Z(G)=(U_\alpha\cap Z(G)) (U_{\alpha^{-1}}\cap Z(G))$
by the hypotheses and $G/Z(G)=U_\beta U_{\beta^{-1}}$
by induction.
Since $q(U_\alpha U_{\alpha^{-1}})=U_\beta U_{\beta^{-1}}=G/Z(G)$,
we have $G=U_\alpha U_{\alpha^{-1}} Z(G)=
U_\alpha U_{\alpha^{-1}}
(U_\alpha\cap Z(G)) (U_{\alpha^{-1}}\cap Z(G))
=
U_\alpha
(U_\alpha\cap Z(G))
U_{\alpha^{-1}}
(U_{\alpha^{-1}}\cap Z(G))
=U_\alpha U_{\alpha^{-1}}$.
\end{proof}
Note that we can easily achieve that
$G/Z_{n-1}=U_{\alpha_n}U_{\alpha_n^{-1}}$ after replacing~$G$ with
its open subgroup generated by $U_\alpha\cup U_{\alpha^{-1}}$.
However, the hypotheses on $Z_k/Z_{k-1}$ for $k<n$ cannot always be
achieved by passing to an open subgroup (as the following example
illustrates).
\begin{rem}\label{Hei1} The following example shows that even for nilpotent
$p$-adic Lie groups with an expansive automorphism~$\alpha$,
the set $U_\alpha U_ {\alpha ^{-1}}$ may fail to be a subgroup.
The example also
provides a $p$-adic Lie group that admits
expansive automorphisms but does not
admit any contractive automorphism. In fact,
the group has a closed discrete commutator group
which is characteristic and hence would inherit a contractive
automorphism (contradicting the fact that non-trivial contraction groups
are non-discrete).\\[2.4mm]
Let $H = \Q_p ^3$ be the $3$-dimensional $p$-adic Heisenberg group
whose binary operation is given by
\[
(x _1, y_1,z _1) (x_2, y_2, z_2) = (x_1+x_2 , y_1 + y _2 ,
z_1+z_2+x_1 y_2)
\]
for all $(x _1, y_1,z _1), (x_2, y_2, z_2) \in H$. Let $N= \{ (0,0,
z ) \in H \colon |z| \leq 1 \}$.  Then~$N$ is a compact central
subgroup of $H$. Identify $G= H/N$ with $\Q_p\times\Q_p\times
(\Q_p/\Z_p)$ as a set.  Define $\alpha \colon G \to G$ by
\[
\alpha (x,y, z+\Z_p) = (px , p^{-1}y, z+\Z_p)
\]
for all $(x,y, z+\Z_p)\in G$.
Then $\alpha$ is a continuous automorphism of the $p$-adic Lie group
$G$ with
$M_\alpha = \{ (0,0, z+\Z_p) \colon  z\in \Q _p \}$, $U_\alpha =
\{ (x ,0 , 0) \colon  x\in \Q _p \}$
and $U_{\alpha ^{-1}} = \{(0,y ,
0) \colon y \in \Q _p \}$.
Since $M_\alpha$ is discrete, $\alpha$ is
an expansive automorphism.  As $[U_\alpha , U_{\alpha ^{-1}}] = \{
(0,0, z+\Z_p)\colon z \in \Q_p\}$ and $U_\alpha U_{\alpha ^{-1}} =
\{(x,y, xy+\Z_p) \colon x, y \in \Q _p \}$, we get that $U_\alpha
U_{\alpha ^{-1}}$ is a not a subgroup.
\end{rem}
\begin{prop}\label{alggp}
Let $G$ be a closed subgroup of $\GL_n(\Q _p)$ and~$\alpha$ be an
expansive automorphism of $G$. Then $U_\alpha U_{\alpha ^{-1}}$ is
an open $($unipotent algebraic$)$ subgroup of~$G$.
\end{prop}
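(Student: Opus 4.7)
The plan is to reduce to Proposition~\ref{issubgp} using Theorem~D and the structure theory of unipotent algebraic $\Q_p$-groups.

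\textbf{Reduction.} By Theorem~D, $G$ contains an open $\alpha$-stable nilpotent subgroup. The open $\alpha$-stable subgroup $\langle U_\alpha\cup U_{\alpha^{-1}}\rangle$ (see Lemma~\ref{basicexpa}\,(d)) is contained in any such nilpotent open subgroup and does not alter $U_\alpha$ or $U_{\alpha^{-1}}$, so I may assume that $G$ is a nilpotent closed subgroup of $\GL_n(\Q_p)$ generated by $U_\alpha\cup U_{\alpha^{-1}}$.

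\textbf{Unipotency.} Next I would show that $G$ consists of unipotent matrices in $\GL_n(\Q_p)$. By Lemma~\ref{onepars}\,(a), each $g\in U_\alpha$ equals $\theta(1)$ for some continuous one-parameter subgroup $\theta\colon \Q_p\to U_\alpha\subseteq \GL_n(\Q_p)$; by~\ref{ratner}, $\theta(t)=\exp(tx)$ with $x\in\Q_p^{n\times n}$ nilpotent, so $g=\exp(x)$ is unipotent. The same applies to $U_{\alpha^{-1}}$, and thus $G$ is a nilpotent subgroup of $\GL_n(\Q_p)$ generated by unipotent elements. Its Zariski closure $\wb{G}^{\mathrm{Zar}}$ is then a nilpotent algebraic $\Q_p$-group, which in characteristic zero decomposes as an almost-direct product of its unipotent radical and a central torus. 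Since $G$ is generated by unipotent elements, its image in the toric quotient is trivial, so $G$ sits inside the unipotent radical and is itself unipotent-algebraic.

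\textbf{Conclusion via Proposition~\ref{issubgp}.} Each factor $Z_k/Z_{k-1}$ of the ascending central series of the unipotent algebraic group $G$ is a $\Q_p$-vector group. The induced automorphism $\alpha_k$ is $\Q_p$-linear and, by Theorem~A, expansive, hence has no eigenvalues of absolute value one; by the eigenspace decomposition (\ref{formsubsp}), $Z_k/Z_{k-1}=U_{\alpha_k}\oplus U_{\alpha_k^{-1}}$. Proposition~\ref{issubgp} then yields $G=U_\alpha U_{\alpha^{-1}}$, an open unipotent-algebraic subgroup of the original~$G$.

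\textbf{Main obstacle.} The delicate step is the unipotency argument: verifying that the toric part of $\wb{G}^{\mathrm{Zar}}$ is trivial really uses linearity. Remark~\ref{Hei1} exhibits a nilpotent (non-linear) $p$-adic Lie group with expansive automorphism for which $U_\alpha U_{\alpha^{-1}}$ is not even a subgroup, so the proof must genuinely exploit the embedding in $\GL_n(\Q_p)$ (equivalently, the availability of Ratner's theorem~\ref{ratner} applied to matrix-valued one-parameter subgroups).
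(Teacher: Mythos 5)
Your reduction to a nilpotent closed subgroup of $\GL_n(\Q_p)$ generated by $U_\alpha\cup U_{\alpha^{-1}}$, and the unipotency argument via one-parameter subgroups and the Zariski closure, match the paper's proof. The gap is in the final step. From $G\sub \mathbb{G}_u(\Q_p)$ you conclude that $G$ ``is itself unipotent-algebraic'' and hence that the factors $Z_k/Z_{k-1}$ of its ascending central series are $\Q_p$-vector groups. Neither claim follows: a closed subgroup of the $\Q_p$-points of a unipotent group need not be the group of $\Q_p$-points of an algebraic subgroup (e.g.\ $\Z_p\sub\Q_p=\mathbb{G}_a(\Q_p)$), and the central factors of such a subgroup are in general only closed subgroups of $\Q_p$-vector spaces, i.e.\ of the form $\Q_p^a\times\Z_p^b$; a nontrivial $\Z_p^b$-factor would violate the hypothesis $Z_k/Z_{k-1}=U_{\alpha_k}U_{\alpha_k^{-1}}$ of Proposition~\ref{issubgp}. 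So the hypothesis of Proposition~\ref{issubgp} is exactly what still has to be verified, and that is the real content of the statement.

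The paper closes this gap as follows. It works with the descending central series, setting $D_i=[\mathbb{G}(\Q_p),D_{i-1}]$ and $G_i=\wb{[G,G_{i-1}]}$, so that the bottom term $G_k$ is a closed $\alpha$-stable subgroup of the vector space $D_k$. Writing $V$ for the maximal vector subspace of $G_k$, the quotient $G_k/V$ is a compact subgroup of $D_k/V$ carrying an expansive automorphism; since compact $p$-adic analytic groups have compact automorphism groups, they admit expansive automorphisms only if finite, and this forces $V=G_k$. Thus the bottom term really is a vector group with $G_k=(U_\alpha\cap G_k)(U_{\alpha^{-1}}\cap G_k)$, and one concludes by induction on $G/G_k\sub\mathbb{G}(\Q_p)/D_k$, which is again linear. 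You would need an argument of this kind (or its analogue for the ascending series) before Proposition~\ref{issubgp} can be invoked. Incidentally, the ``main obstacle'' you single out --- triviality of the toric part of the Zariski closure --- is the easy step, disposed of in the paper by a citation to Borel; the delicate point is the one above.
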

\begin{proof}
Replacing $G$ by the group generated by~$U_\alpha$ and~$U_{\alpha^{-1}}$,
we may assume by Theorem~E that~$G$ is a closed nilpotent
subgroup of $\GL_n(\Q_p)$.  Let~$\mathbb G$ be the Zariski closure
of $G$.  Then~$\mathbb G$ is defined over~$\Q_p$ and~$\mathbb G$ is
nilpotent (cf.\ Proposition~1.3\,(b) and Corollary~1 in~2.4
of~\cite{Bo}).  Since~$U_\alpha$ and~$U_{\alpha^{-1}}$ consists of
one-parameter (unipotent) subgroups, $\mathbb G$ is
Zariski-connected.  This implies that the set of unipotent elements
form a subgroup $\mathbb G_u$, known as the unipotent radical (cf.\
Theorem~10.6 of \cite{Bo}).  Since $U_\alpha$ and $U_{\alpha^{-1}}$
consists of one-parameter (unipotent) subgroups, $U_\alpha ,
U_{\alpha ^{-1}} \sub \mathbb G_u$.  This implies that $\mathbb G
= \mathbb G_u$, that is~$\mathbb G$ is an unipotent algebraic group,
hence $\mathbb G$ is defined over~$\Q _p$ (cf.\ 4.5 of \cite{Bo} and
the fact that $\Q_p$-closed and defined over $\Q_p$ are same as
characteristic of $\Q_p$ is zero) and $G\sub \mathbb G (\Q_p)$.\\[2.4mm]
For $i\geq 1$, let $D_i=[\mathbb G (\Q_p), D_{i-1}]$ with
$D_0=\mathbb G(\Q _p)$ and $G_i = \overline {[G, G_{i-1}]}$ with
$G_0=G$. Then $D_{k+1}$ is trivial for some $k\geq 1$ as $\mathbb G$
is unipotent and $G_i\subset D_i$.  Thus, $G_k$ is a closed
$\alpha$-stable subgroup of~$D_k$ which is a vector space. Let~$V$
be the maximal vector subspace of~$G_k$.  Then~$V$ is a closed
$\alpha$-stable central subgroup of~$G$.  The automorphism $\beta
\colon G_k /V \to G_k/V$ defined by $\beta (x+V) = \alpha (x) +V$
for $x\in G_k$ is expansive. Since~$V$ is the maximal vector
subspace of~$G_k$ and~$G_k$ is a closed subgroup of the $p$-adic
vector space~$D_k$, we get that~$G_k/V$ is a compact subgroup of the
$p$-adic vector space~$D_k/V$. Since the automorphism group of a
compact $p$-adic analytic group is compact, compact $p$-adic
analytic groups do not admit expansive automorphisms unless finite,
hence $V=G_k$.  This implies that $G_k= D_k$ and $G_k=V=
(U_\alpha\cap V) (U_{\alpha ^{-1}}\cap V)$. Since $G/G_k$ is a
closed subgroup of $\mathbb G(\Q _p)/D_k$ which is a linear
($p$-adic algebraic) group, the result follows by induction.
\end{proof}
\begin{rem}\label{heis2}
In the case of linear $p$-adic Lie groups, even if $U_\alpha
U_{\alpha  ^{-1}}$ is an open subgroup for an expansive automorphism,
the following example shows that it is not possible to have either
of $U_{\alpha }$ or $U_{\alpha ^{-1}}$ to normalize the other.\\[2.4mm]
Let~$H$ be the $3$-dimensional $p$-adic Heisenberg group defined as
in Remark~\ref{Hei1}.
For $i=1,2$, define $\alpha _i \colon H \to H$ by
\[
\alpha _1(x,y,z) = (px , p^{-2}y, p^{-1}z), ~~\alpha _2(x,y, z) = (p^2x,
p^{-1}y, pz)
\]
for $(x,y, z)\in H$.  Let $G= H\times H$ and $\alpha
= \alpha _1 \times \alpha _2$.  Then
\[
U_{\alpha } = \{(x,0, 0) \colon
x\in \Q _p \} \times \{(a,0, c) \colon a,c\in \Q _p \}
\]
and
\[
U_{\alpha ^{-1}} = \{(0,y,z) \colon y,z\in \Q _p \} \times \{(0,b, 0)
\mid b \in \Q _p \}.
\]
Thus $U_\alpha U_{\alpha^{-1}}=G$.
Since $\{ (x,0,0) \colon x \in \Q_p \}$ and
$\{ (0,y,0) \colon y \in \Q _p \}$ are not normal subgroups of $H$,
neither $U_\alpha$ or $U_{\alpha ^{-1}}$ normalize the other.
\end{rem}
\section{Example: Baumslag-Solitar groups}
Throughout this section, we fix primes $p\not=q$.
We let
\[
\BS(p,q):=\langle a,t| ta^pt^{-1}=a^q\rangle
\]
be the Baumslag-Solitar group. Then $\langle a\rangle \cap g\langle
a\rangle g^{-1}$ has finite index in $\langle a\rangle$ for each
$g\in \BS(p,q)$, and $\bigcap_g g\langle a \rangle g^{-1} =\{ 1 \}$,
hence the Schlichting completion $G_{p,q}$ of $\BS(p,q)$ can be
formed, which is a certain totally disconnected, locally compact
group in which~$\BS(p,q)$ is dense, and in which $K:=\wb{\langle
a\rangle}$ is a compact open subgroup (see \cite{EaW}, cf.\
\cite{Sch} and \cite{Hec}). We are interested in the inner
automorphism\vspace{-3mm}
\[
\alpha\colon G_{p,q}\to G_{p,q},\quad x\mto txt^{-1}\,.\vspace{1mm}
\]
{\bf Proof of Theorem E.}
By \cite[Proposition~8.1]{EaW}, $K$ contains an open subgroup $V\cong
\Z _p \times \Z _q$ and $K/V$ is a cyclic group of order diving ${\rm
gcd}(p,q)=1$.  Thus $K=V \cong \Z_p \times \Z_q$.
After multiplication with a unit,
We may assume that the isomorphism takes $a$ to~$(1,1)$.\\[2.3mm]
Let $G= \Z \ltimes  (\Q _p \times \Q _q)$ be the semidirect product
of $\Z$ and $\Q _p \times \Q _q$ given by $(n,u,v)(m,u',v')= (n+m,
u+(q/p)^n u', v+(q/p)^nv')$ for all $n,m\in \Z$, $u,u'\in \Q_p$ and
$v,v'\in \Q _q$.  The isomorphism $K\cong \Z_p \times \Z_q$ gives
a homomorphism from $\langle a \rangle$ to~$G$. Since
$(1,0,0)(0, p, p)(-1,0,0) = (0,q,q)$, sending $t\mto (1,0,0)$ yields
a group homomorphism $\phi \colon \BS(p,q) \to G$.  Since $\phi |_{\langle a
\rangle}$ is a continuous homomorphism, $\phi$ extends to a
continuous homomorphism of $G_{p,q}$ into $G$ which would also be
denoted by $\phi$.  Since $\phi |_K$ is an isomorphism, $\ker(\phi)$
is discrete. Moreover, as $\phi (G_{p,q})$ contains
both $(1,0,0)$ and $\Z _p\times \Z _q$, $\phi$ is surjective.\\[2.3mm]
Let~$\beta$ be the inner automorphism of~$G$ given by $(1,0,0)$.
Then $\phi \circ \alpha = \beta \circ \phi$ and~$\beta$ is expansive.
Since the
kernel of~$\phi$ is discrete, expansiveness of~$\alpha$ follows from
Theorem~A.
As the open subgroup $K\cong \Z_p\times \Z_q$
satisfies an ascending chain condition on closed
subgroups (see, e.g., \cite[Proposition~3.2]{Grp}), $U_\alpha$ is closed
by \cite[Lemma~3.2]{Wan}.\\[2.3mm]
In case $U_\alpha U_{\alpha^{-1}}$ is a group, we will now show
that~$\phi$ is an isomorphism which would lead to a contradiction as
$G_{p,q}$ is not solvable but~$G$ is solvable.\footnote{Alternatively,
$\BS(p,q)$ would be a finitely generated
linear group then and hence residually finite by~\cite{Mal}.
But $\BS(p,q)$ is not residually finite for primes
$p\not=q$, see \cite{Mes}.}
Suppose $N:=U_\alpha
U_{\alpha ^{-1}}$ is a group.
Then~$\phi |_N$ is an isomorphism of~$N$ with $\Q_p \times \Q_q$
(using that $U_\alpha\cong\Q_q$ and
$U_{\alpha^{-1}}\cong \Q_p$).\footnote{In fact, $V=K$ is tidy for~$\alpha$ with $V_-\cong \Z_q$,
$V_+\cong \Z_p$ and $V_0=\{1\}$ (see \cite{EaW}),
whence $U_\alpha=V_{--}\cong\Q_q$
and $U_{\alpha^{-1}}=V_{++}\cong\Q_p$ (cf.\ Lemma~\ref{basicexpa}\,(b).}
Now the group generated by $t$ and~$N$ is an open subgroup of~$G$
containing both~$t$ and~$a$, hence $G_{p,q}=\langle t, N \rangle
=\langle t\rangle N$ (as $t$ normalizes~$N$).
This implies that~$\phi$ is an isomorphism.\,\Punkt
{\small
{\bf Helge  Gl\"{o}ckner}, Universit\"at Paderborn, Institut f\"{u}r Mathematik,\\
Warburger Str.\ 100, 33098 Paderborn, Germany;\,
e-mail: {\tt  glockner\at{}math.upb.de}\\[2.5mm]
{\bf C.\,R.\,E. Raja},
Stat-Math Unit, Indian Statistical Institute (ISI),\\
8th Mile Mysore Road, Bangalore 560 059, India;\,
e-mail: {\tt creraja@isibang.ac.in}}

\begin{thebibliography}{99}\vspace{-1mm}\itemsep+1pt
%
%
\bibitem{BaW}
Baumgartner, U. and G.\,A. Willis,
\emph{Contraction groups and scales of automorphisms of totally disconnected
locally compact groups}, Israel J. Math.\ {\bf 142} (2004), 221--248.
%
%
\bibitem{Bo}
Borel, A., ''Linear algebraic groups''. Second edition. Graduate
Texts in Mathematics, 126. Springer-Verlag, New York, 1991.
%
%
\bibitem{Bou}
Bourbaki, N.,
``Lie Groups and Lie Algebras, Chapters 1--3,''
Springer-Verlag, Berlin, 1989.
%
%
\bibitem{EaW}
Elder, M. and G.\,A. Willis,
\emph{Totally disconnected groups from Baumslag-Solitar groups},
preprint, {\tt arXiv:1301.4775}.
%
%
\bibitem{Glo}
Gl\"{o}ckner, H.,
\emph{Contraction groups for tidy automorphisms of totally disconnected groups},
Glasg.\ Math.\ J. {\bf 47} (2005), 329--333.
%
%
\bibitem{Grp}
Gl\"{o}ckner, H.,
\emph{Locally compact groups built up from $p$-adic
Lie groups, for~$p$ in a given set of primes}, J. Group Theory {\bf 9} (2006),
427--454.
%
%
\bibitem{GIM}
Gl\"{o}ckner, H.,
\emph{Invariant manifolds for analytic dynamical systems over ultrametric fields},
Expo.\ Math.\ {\bf 31} (2013), 116--150.
%
\bibitem{GFi}
Gl\"{o}ckner, H.,
\emph{Invariant manifolds for finite-dimensional non-archimedean dynamical systems},
in: H. Gl\"{o}ckner, A. Escassut and K. Shamseddine (eds.), ``Advances in Non-Archimedean
Analysis,'' to appear in Contemp.\ Math.
%
%
\bibitem{Hec}
Gl\"{o}ckner, H. and G.\,A. Willis,
\emph{Topologization of Hecke pairs and Hecke C*-algebras},
Topol.\ Proc.\ {\bf 26}
(2001--2002), 565--591.
%
%
\bibitem{GaW}
Gl\"{o}ckner, H. and G.\,A. Willis,
\emph{Classification of the simple factors appearing in composition series of
totally disconnected contraction groups}, J. Reine Angew.\ Math.\ {\bf 643}
(2010), 141--169.
%
%
\bibitem{HaR}
Hewitt, E. and K.\,A. Ross, ``Abstract Harmonic Analysis,''
Vol.\,I, Springer, 2nd Edition, New York, 1979.
%
%
\bibitem{Jac}
Jacobson, N., \emph{A note on automorphisms and derivations of Lie algebras},
Proc.\ Amer.\ Math.\ Soc.\ {\bf 6} (1955), 281--283.
%
%
\bibitem{Jaw}
Jaworski, W.,
\emph{On contraction groups of automorphisms of totally disconnected locally compact groups},
Israel J. Math.\ {\bf 172} (2009), 1--8.
%
%
\bibitem{Ki}
Kitchens, B.\,P., \emph{Expansive dynamics on zero-dimensional
groups}, Ergodic Theory Dynam. Systems 7 (1987), 249--261.
%
%
\bibitem{Lam}
Lam, P.,
\emph{On expansive transformation groups}, Trans.\ Amer.\ Math.\ Soc.\ {\bf 150}
(1970), 131--138. %
%
%
\bibitem{Mal}
Mal'cev, A.\,I.,
\emph{On the faithful representation of infinite groups by matrices}.
Transl.\ Amer.\ Math.\ Soc.\ (2), {\bf 45} (1965), 1--18.
%
%
\bibitem{Mar}
Margulis, G.\,A., ``Discrete Subgroups of Semisimple Lie Groups,''
Springer, Berlin, 1991.
%
%
\bibitem{Mes}
Meskin, S.,
\emph{Nonresidually finite one-relator groups},
Trans.\ Amer.\ Math.\ Soc.\ {\bf 164} (1972), 105--114.
%
%
\bibitem{Rat}
Ratner, M.,
\emph{Raghunathan's conjectures for Cartesian products of real and $p$-adic Lie groups},
Duke Math.\ J. {\bf 77} (1995), 275--382.
%
%
\bibitem{Shi}
Schikhof, W.\,H., ``Ultrametric Calculus,''Cambridge University Press,
1984.
%
%
\bibitem{Sch}
Schlichting, G.,
\emph{On the periodicity of group operations},
pp.\,507--517 in:
Cheng, Kai Nah and Leong, Yu Kiang (eds.),
``Group Theory'' (Singapore 1987),
de Gruyter, Berlin, 1989.
%
%
\bibitem{Sc}
Schmidt, K., ``Dynamical systems of algebraic origin,'' Progress in
Mathematics, 128. Birkh\"{a}user Verlag, Basel, 1995.
%
%
\bibitem{Ser}
Serre, J.-P.,
``Lie Algebras and Lie Groups,''
Springer, Berlin, 1992.
%
%
\bibitem{Sie}
Siebert, E.,
\emph{Contractive automorphisms on locally compact groups},
Math.\ Z. {\bf 191} (1986), 73--90.
%
%
\bibitem{Si2}
Siebert, E.,
\emph{Semistable convolution semigroups and the topology of contraction groups},
pp.\,325--343 in:
``Probability Measures on Groups, IX'' (Oberwolfach, 1988),
Lecture Notes in Math.\ {\bf 1379}, Springer, Berlin, 1989.
%
%
\bibitem{Wan}
Wang, J.\,S.\,P.,
\emph{The Mautner phenomenon for $p$-adic Lie groups},
Math.\ Z. {\bf 185} (1984), 403--412.
%
%
\bibitem{Wei}
Weil, A.,
``Basic Number Theory,''
Springer, Berlin, 2nd ed., 1973.
%
%
\bibitem{Wes}
Wesolek, P.,
\emph{Elementary totally disconnected locally compact groups},
Proc.\ Lond.\ Math.\ Soc.\ (3) {\bf 110} (2015),
1387--1434.
%
%
\bibitem{Wi1}
Willis, G.\,A.,
\emph{The structure of totally disconnected, locally compact groups},
Math.\ Ann.\ {\bf 300} (1994), 341--363.
%
%
\bibitem{Wi2}
Willis, G.\,A.,
\emph{Further properties of the scale function on a totally disconnected group},
J. Algebra {\bf 237} (2001), 142--164.
%
%
\bibitem{Wi3}
Willis, G.\,A., \emph{The nub of an automorphism of a totally disconnected,
locally compact group},
Ergodic Theory Dyn.\ Syst.\ {\bf 34} (2014),
1365--1394.\vspace{1.5mm}
%
%
\end{thebibliography}
\end{document}